\documentclass[12pt]{amsart}
\usepackage[margin=1in]{geometry}
\usepackage{hyperref}
\usepackage{amsfonts}
\usepackage{amssymb, mathrsfs}
\usepackage{amsopn}
\usepackage{amsmath,amscd}

\title{On fully residually-$\mathcal{R}$ groups}
\author{Inna Bumagin and Ming Ming Zhang}
\address{School of Mathematics and Statisics\\
               Carleton University\\
               Ottawa, ON, Canada  K1S 5B6\\
               E-mail addresses : \href{mailto:bumagin@math.carleton.ca}{bumagin@math.carleton.ca}
\href{mailto:mingzhang@cmail.carleton.ca}{mingzhang@cmail.carleton.ca}}

%\date{\today}

\newtheorem{theorem}{Theorem}[section]
\newtheorem{definition}[theorem]{Definition}
\newtheorem{lemma}[theorem]{Lemma}
\newtheorem{corollary}[theorem]{Corollary}
\newtheorem{proposition}[theorem]{Proposition}
\newtheorem{remark}[theorem]{Remark}
\newtheorem{example}[theorem]{Example}
\newtheorem*{question}{Question}

%%%%%%%%%%%%%%%%%%%%%%%%%%%%%%%%%%%%%%%%%%%%%%%%%%%%
\begin{document}

\begin{abstract} We consider the class $\mathcal{R}$ of finitely generated toral relatively hyperbolic groups. We show that groups from $\mathcal{R}$ are commutative transitive and generalize a theorem proved by Benjamin Baumslag in~\cite{Baumslag65} to this class. We also discuss two definitions of (fully) residually-$\mathcal{C}$ groups, the classical Definition \ref{def:resCclassical} and a modified Definition \ref{def:resCworking}. Building upon results obtained by Ol'shanskii~\cite{Olshanskii93} and Osin~\cite{Osin2010}, we prove the equivalence of the two definitions for $\mathcal{C}=\mathcal{R}$. This is a generalization of the similar result obtained by Ol'shanskii for $\mathcal{C}$ being the class of torsion-free hyperbolic groups. Let $\Gamma\in\mathcal{R}$ be non-abelian and non-elementary. Kharlampovich and Miasnikov proved in ~\cite{KharlampovichMyasnikov09} that a finitely generated fully residually-$\Gamma$ group $G$ embeds into an iterated extension of centralizers of $\Gamma$. We deduce from their theorem that every finitely generated fully residually-$\Gamma$ group embeds into a group from $\mathcal{R}$. On the other hand, we give an example of a finitely generated torsion-free fully residually-$\mathcal{H}$ group that does not embed into a group from $\mathcal{R}$; $\mathcal{H}$ is the class of hyperbolic groups. 
\end{abstract}

\maketitle

%%%%%%%%%%%%%%%%%%%%%%%%%%%%%%%%%%%%%%%%%%%%%

\section{Introduction}

The notion of a (fully) residually-$\mathcal{C}$ group, where $\mathcal{C}$ is a class of groups, was introduced long time ago. Usually, $\mathcal{C}$ is chosen to be a class of groups with nice properties, such as the class of all finite groups, nilpotent groups, free groups, etc.  The classical definitions are as follows.

\begin{definition}\label{def:resCclassical}\rm \textbf{(Classical Definition.)}
Let $\mathcal{C}$ be a class of groups. A group $G$ is called a \textit{residually}-$\mathcal{C}$ \textit{group} if for every nontrivial element $1\neq g\in G$ there is a group $H_g\in\mathcal{C}$ and an onto homomorphism $\phi_g\colon G\rightarrow H_g$, such that $\phi_g(g)\neq 1$. A group $G$ is called a \textit{fully residually-$\mathcal{C}$ group} if for every finite set $S=\{g_1,\dots,g_n\}$ of distinct elements of $G$ there is a group $H_S\in\mathcal{C}$ and an onto homomorphism $\phi_S\colon G\rightarrow H_S$, such that the images $\phi_S(g_1),\dots,\phi_S(g_n)$ are all distinct in $H_S$.
\end{definition}

Clearly, every fully residually-$\mathcal{C}$ group is, in particular, residually-$\mathcal{C}$. For some classes $\mathcal{C}$, the opposite also holds. It is not hard to show that every residually finite group  is fully residually finite and that every residually nilpotent group is fully residually nilpotent. The situation is very different for the class of free groups, as was shown by Benjamin Baumslag.

\begin{theorem} (B. Baumslag~\cite{Baumslag65}) \label{thm:BB}
 A group is fully residually free if and only if it is residually free and commutative transitive.
\end{theorem}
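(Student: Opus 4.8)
The plan is to prove the two directions separately, the forward direction being routine and the converse carrying all the weight. For \textbf{fully residually free $\Rightarrow$ residually free and commutative transitive}, residual freeness is immediate from Definition~\ref{def:resCclassical} by applying full residual freeness to the set $\{1,g\}$. For commutative transitivity, suppose $[a,b]=1$, $[b,c]=1$ with $b\neq 1$ but $[a,c]\neq 1$. First I would record the elementary fact that free groups are commutative transitive, since the centralizer of a nontrivial element of a free group is cyclic, hence abelian. Then I would apply full residual freeness to the finite set $\{1,\,b,\,[a,c]\}$ to obtain an onto homomorphism $\phi$ to a free group $F$ injective on this set; in particular $\phi(b)\neq 1$ and $\phi([a,c])\neq 1$. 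Since homomorphisms preserve commutation, $[\phi(a),\phi(b)]=1$ and $[\phi(b),\phi(c)]=1$ with $\phi(b)\neq 1$, so commutative transitivity of $F$ forces $\phi([a,c])=[\phi(a),\phi(c)]=1$, a contradiction.

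For \textbf{residually free and commutative transitive $\Rightarrow$ fully residually free}, I would first make the standard reduction: to separate a finite set $S=\{g_1,\dots,g_n\}$ in a free quotient it suffices to find, for the finitely many nontrivial elements $g_ig_j^{-1}$, a single homomorphism to a free group nontrivial on all of them, and an obvious induction reduces this to the two-element task: given nontrivial $u,v\in G$, produce an onto homomorphism to a free group nontrivial on both. The non-commuting case is easy: if $[u,v]\neq 1$, residual freeness applied to $[u,v]$ yields $\delta\colon G\to F$ with $[\delta(u),\delta(v)]=\delta([u,v])\neq 1$, whence $\delta(u)\neq 1$ and $\delta(v)\neq 1$ simultaneously.

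The entire difficulty is concentrated in the commuting case $[u,v]=1$, and this is where commutative transitivity must be used. Since $G$ is residually free it is torsion-free, and by commutative transitivity the centralizer $C_G(u)$ is abelian and contains $v$; hence $\langle u,v\rangle$ is a finitely generated torsion-free abelian group, i.e.\ free abelian. If $\langle u,v\rangle$ is cyclic then $u$ and $v$ are powers of a common element and any free quotient nontrivial on $u$ is automatically nontrivial on $v$, so I may assume $\langle u,v\rangle\cong\mathbb{Z}^2$. The key point is that every abelian subgroup of a free group is cyclic, so any residually free quotient collapses $\langle u,v\rangle$ onto a cyclic group, and the task becomes to find a homomorphism $G\to F$ to a free group whose restriction to $\langle u,v\rangle\cong\mathbb{Z}^2$ is nonzero on both $u$ and $v$. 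This is the free-abelian analogue of separating points in $\mathbb{Z}^2$ by a single projection avoiding finitely many hyperplanes.

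I expect this last step to be the main obstacle. Residual freeness supplies, for each of the infinitely many nonzero elements $uv^k\in\langle u,v\rangle$, a homomorphism whose restriction to $\langle u,v\rangle$ is a functional nonzero on $uv^k$; the trouble is that such realizable functionals need not be closed under addition, so producing one that is simultaneously nonzero on $u$ and on $v$ is not formal. This is exactly the step that fails without commutative transitivity: the obstruction to realizing a ``diagonal'' functional is the presence of a subgroup isomorphic to $F_2\times\mathbb{Z}$, which is precisely what commutative transitivity forbids. My plan is to rule out this obstruction using commutative transitivity together with the structure of finitely generated subgroups of a direct product of free groups (obtained from the two separating maps via the product homomorphism $G\to F\times F'$), and to realize the required functional by a big-powers argument in a common free group, following the line of B.\ Baumslag's original construction~\cite{Baumslag65}.
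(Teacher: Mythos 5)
Your forward direction is fine. The converse, however, has a genuine gap, and it sits exactly where you placed your confidence rather than where you placed your doubt. The ``obvious induction'' reducing separation of $n$ elements to the two-element task does not exist: for each pair $\{u,v\}$ you get a \emph{different} homomorphism, and there is no formal way to merge separating maps for pairs into one map handling three elements. What makes the induction work in B.~Baumslag's proof (reproduced in this paper as Proposition~\ref{prop-2}) is a strictly stronger inductive statement: one produces a single nontrivial \emph{witness} element $g$ such that \emph{any} homomorphism nontrivial on $g$ is automatically nontrivial on all of $g_1,\dots,g_k$. The inductive step sets $g'=[g,\,x g_{k+1}x^{-1}]$ for a suitable $x$, and commutative transitivity enters precisely to show such an $x$ exists: if $[g,\,xg_{k+1}x^{-1}]=1$ for all $x$, then by commutative transitivity the normal closure $A$ of $g_{k+1}$ is abelian and normal, hence contained in the centre (the free-group analogue of Lemma~\ref{lemma:abelian-normal-centre}, via Corollary~\ref{cor:abelian-normal-subsubgp}-type facts for free quotients), and one may assume $Z(G)=1$ at the outset since otherwise commutative transitivity forces $G$ abelian, a case handled directly; this makes $A$ trivial, contradicting $g_{k+1}\neq 1$. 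A single application of residual freeness to the final witness then finishes the proof.

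The second problem is that your commuting case --- which you correctly identify as carrying the weight --- is a plan, not a proof: ruling out the $F_2\times\mathbb{Z}$ obstruction, analyzing which functionals on $\langle u,v\rangle\cong\mathbb{Z}^2$ are realizable by restrictions of maps $G\to F$, and a big-powers argument are all left unexecuted, and the realizability question is genuinely delicate (as you note, realizable functionals need not be closed under addition). The witness trick makes this entire analysis unnecessary: even when $[u,v]=1$, the conjugate $xvx^{-1}$ need not commute with $u$, so the argument above produces $x$ with $[u,\,xvx^{-1}]\neq 1$, and residual freeness applied to this one element yields a map nontrivial on $u$ and on $xvx^{-1}$, hence on $v$ --- with no case split between commuting and non-commuting pairs and no functional realization on $\mathbb{Z}^2$ at all. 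In short: the reduction to pair-tasks is the wrong decomposition, and the step you deferred to ``the line of Baumslag's original construction'' is in fact the whole proof.
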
 

Recall that a group $G$ is called \textit{commutative transitive} if for any nontrivial elements $g,h,f\in G$, if $[g,h]=1$ and $[g,f]=1$ then $[h,f]=1$.

We generalize Baumslag's theorem to the wider class $\mathcal{R}$ of finitely generated toral relatively hyperbolic groups defined in Section~\ref{sec:RelHypGps}. In Section~\ref{sec:BaumslagThm} we prove the following theorem.

\begin{theorem} \label{thm:generalizedBB}
A finitely generated group is fully residually-$\mathcal{R}$ if and only if it is residually-$\mathcal{R}$ and commutative transitive. 
\end{theorem}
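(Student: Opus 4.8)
The plan is to prove the two directions of the equivalence separately. The forward direction is essentially formal: I would argue that every fully residually-$\mathcal{R}$ group is, by definition, residually-$\mathcal{R}$, and that commutative transitivity is a consequence of the fact that every group in $\mathcal{R}$ is commutative transitive (which the abstract tells us is established earlier in the paper). More precisely, suppose $G$ is fully residually-$\mathcal{R}$ and take nontrivial $g,h,f \in G$ with $[g,h]=1$ and $[g,f]=1$; I want to show $[h,f]=1$. If $[h,f] \neq 1$, I would apply the fully residually-$\mathcal{R}$ property to the finite set consisting of the relevant elements (in particular to the commutator $[h,f]$ together with the identity, so that the image of $[h,f]$ survives), obtaining a homomorphism $\phi \colon G \to H$ with $H \in \mathcal{R}$ and $\phi([h,f]) \neq 1$. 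The maps $\phi$ preserve the commuting relations, so $[\phi(g),\phi(h)]=1$ and $[\phi(g),\phi(f)]=1$ in $H$; since $\phi(g)$ need not be trivial (I can include $g$ in the finite set to guarantee $\phi(g)\neq 1$), commutative transitivity in $H$ forces $[\phi(h),\phi(f)]=1$, contradicting $\phi([h,f])\neq 1$. Hence $[h,f]=1$.

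The reverse direction is the substantive one and will mirror Baumslag's classical argument. Assume $G$ is finitely generated, residually-$\mathcal{R}$, and commutative transitive; I want to upgrade the ``separation of a single element'' to ``separation of a finite set,'' equivalently to separating a single nontrivial element of the form arising from a finite list. The standard reduction is: separating a finite set $\{g_1,\dots,g_n\}$ is equivalent to finding, for the finitely many nontrivial elements $g_i g_j^{-1}$, a single homomorphism onto a group in $\mathcal{R}$ whose kernel avoids all of them simultaneously. So the heart of the matter is to show that if $G$ is residually-$\mathcal{R}$ and commutative transitive, then for any two nontrivial elements $u,v \in G$ there is a single $\phi\colon G \to H \in \mathcal{R}$ with $\phi(u)\neq 1$ and $\phi(v)\neq 1$; an induction then handles arbitrary finite sets. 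To produce such a $\phi$, I would take homomorphisms $\phi_u$ and $\phi_v$ separating $u$ and $v$ respectively, and attempt to combine them. The classical Baumslag trick replaces the pair $(u,v)$ by a single well-chosen element: using commutative transitivity one shows that if no single quotient separates both $u$ and $v$, then $u$ and $v$ are forced into a common centralizer structure, and one manufactures an element (typically a commutator like $[u, u^w]$ or a product conjugate) that is nontrivial yet must die in every quotient, contradicting residual-$\mathcal{R}$-ness.

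Carrying this out requires the algebraic regularity of groups in $\mathcal{R}$, namely commutative transitivity of the target groups together with the structure of centralizers in toral relatively hyperbolic groups. The key lemma I expect to need is: in a commutative transitive group, two nontrivial elements $u,v$ fail to be jointly separable in some residually-$\mathcal{R}$ quotient only if they are ``linked'' through commutation, and this linkage can be exploited to build a single nontrivial test element. Concretely, I anticipate the argument that if $[u,v] \neq 1$ one constructs an element $w = [u,v]$ and separates it by a single quotient; while if $[u,v]=1$, then $u$ and $v$ lie in a common abelian subgroup, and one uses that the image of an abelian subgroup of $G$ under a suitable $\phi$ remains abelian and torsion-free (since groups in $\mathcal{R}$ are toral, their maximal abelian subgroups are well-behaved), allowing both $\phi(u)$ and $\phi(v)$ to be kept nontrivial by controlling a single element in that abelian subgroup. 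The main obstacle, and the place where the toral relatively hyperbolic hypothesis does the real work, is exactly this step of simultaneously preserving two independent elements in a quotient: one must leverage commutative transitivity of $G$ to reduce the two-element separation problem to a one-element separation problem to which residual-$\mathcal{R}$-ness directly applies, and then verify that the commuting and non-commuting relations behave well under the separating homomorphisms. I expect the bookkeeping around centralizers and the torsion-free abelian structure to be where the proof is most delicate, whereas the outer induction on the size of the finite set $S$ is routine once the two-element case is secured.
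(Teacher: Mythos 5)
Your forward direction is correct and is exactly the paper's argument (Proposition~\ref{prop-1}): map a finite set containing $g$ and $[h,f]$ into some $H\in\mathcal{R}$ and invoke commutative transitivity of $H$, which the paper establishes as Theorem~\ref{thm:CT-R}. The genuine gap is in your reverse direction, specifically in your treatment of the case $[u,v]=1$. Your plan --- ``$u$ and $v$ lie in a common abelian subgroup, and one keeps both alive by controlling a single element of that subgroup'' --- cannot work even in principle: already in $\mathbb{Z}^{2}$ with $u=(1,0)$ and $v=(0,1)$ there is no single element $w$ such that every homomorphism with $\phi(w)\neq 1$ also has $\phi(u)\neq 1$ and $\phi(v)\neq 1$ (if the second coordinate of $w$ is nonzero, project away $u$; if it is zero, project away $v$). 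So for commuting pairs the two-element separation problem genuinely cannot be reduced to a one-element problem inside the abelian subgroup, and your candidate test elements ($[u,v]$, or $[u,u^{w}]$) are the wrong ones in this case.

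What Baumslag's argument --- and the paper's Proposition~\ref{prop-2} --- actually does differs in two respects. First, the abelian situation is disposed of globally at the outset: if $Z(G)\neq 1$, commutative transitivity forces $G$ to be abelian; residual-$\mathcal{R}$-ness then makes $G$ torsion-free (Lemma~\ref{lemma:res-torsionfree}), so the finitely generated $G$ is free abelian of finite rank, hence itself in $\mathcal{R}$ (being hyperbolic relative to itself) and trivially fully residually-$\mathcal{R}$ --- no separation argument is needed there at all. Second, with $Z(G)=1$, the inductive step uses the \emph{conjugated} test element $c(x)=[g,\,x g_{k+1}x^{-1}]$: if $c(x)=1$ for all $x\in G$, commutative transitivity makes the normal closure $A$ of $g_{k+1}$ abelian as well as normal, and the key lemma your outline is missing --- every abelian normal subgroup of a residually-$\mathcal{R}$ group lies in its centre (Lemma~\ref{lemma:abelian-normal-centre}, which in turn rests on Corollary~\ref{cor:abelian-normal-subsubgp}, i.e.\ on malnormality of maximal parabolic subgroups and of the subgroups $E(g)$ in torsion-free relatively hyperbolic groups) --- yields $A\subseteq Z(G)=\{1\}$, contradicting $1\neq g_{k+1}\in A$. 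Hence some conjugate of $g_{k+1}$ fails to commute with $g$, and $g'=c(x)$ serves as the new single test element. Your outline has the right skeleton (induction reducing a finite set to one test element, then one application of residual-$\mathcal{R}$-ness), but it lacks both the centre dichotomy and the abelian-normal-implies-central lemma, and the latter is precisely where the toral relatively hyperbolic structure does its work.
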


A generalization of Theorem~\ref{thm:BB} to some classes of groups was obtained by Ciobanu, Fine and Rosenberger in~\cite{CFR}. Whereas some of the groups they consider are in the class $\mathcal{R}$, other groups have torsion and thus are not covered by our Theorem~\ref{thm:generalizedBB}.

Our motivation is rooted in the growing interest to the algebraic geometry over toral relatively hyperbolic groups. 
Fully residually free groups play a critical role in the theory of equations over free groups and therefore, in the solution to the famous Tarski's problems~\cite{SelaElemTheoryFreeGps06},~\cite{KharlampovichMyasnikov06}. We refer the interested reader to~\cite{KharlampovichMyasnikov98p1},~\cite{KharlampovichMyasnikov98},~\cite{BaumsalgMyasnikovRemeslennikov99}, where the basics of the algebraic geometry over free groups are developed. Later, the solution to the Tarski problem was generalized to torsion-free hyperbolic groups~\cite{SelaElemTheoryHypGps},~\cite{KharlampovichMyasnikov13}. Sela introduced limit groups and showed that limit groups are precisely finitely generated fully residually free groups~\cite{SelaMRDiagrams01}. 

However, one changes the point of view slightly when studying algebraic geometry over (relatively) hyperbolic groups.
Firstly, one has to consider all homomorphisms, not only epimorphisms; in other words, one uses the following  Definition~\ref{def:resCworking}, rather than Definition~\ref{def:resCclassical}. 

\begin{definition}\label{def:resCworking}\rm \textbf{(Working Definition.)}
Let $\mathcal{C}$ be a class of groups. A group $G$ is called a \textit{residually}-$\mathcal{C}$ \textit{group} if for every nontrivial element $1\neq g\in G$ there is a group $H_g\in\mathcal{C}$ and a homomorphism $\phi_g\colon G\rightarrow H_g$, such that $\phi_g(g)\neq 1$. A group $G$ is called a \textit{fully residually-$\mathcal{C}$ group} if for every finite set $S=\{g_1,\dots,g_n\}$ of distinct elements of $G$ there is a group $H_S\in\mathcal{C}$ and a homomorphism $\phi_S\colon G\rightarrow H_S$, such that the images $\phi_S(g_1),\dots,\phi_S(g_n)$ are all distinct in $H_S$.
\end{definition} 

The difference with the Classical Definition~\ref{def:resCclassical} is that the homomorphisms $\phi_g$ and $\phi_S$ are no longer required to be onto. Clearly, the classes of (fully) residually finite and (fully) residually free groups are the same for both definitions. The equivalence of the definitions for $\mathcal{C}$ being the class of torsion-free hyperbolic groups and $G$ being finitely generated is immediate from results of Ol'shanskii~\cite{Olshanskii93}. In Section~ \ref{sec:equivalent-def} we show that the definitions are equivalent for the class of finitely generated toral relatively hyperbolic groups. Our proof is based on statements that generalize results of Ol'shanskii and are proved by Osin~\cite{Osin2010}. The equivalence of the definitions is an immediate corollary of the following theorem.

\begin{theorem} Let $\mathcal{R}$ denote the class of finitely generated toral relatively hyperbolic groups, and let $\Gamma$ be a group in $\mathcal{R}$. Then every finitely generated subgroup of $\Gamma$ is a fully residually-$\mathcal{R}$ group in the sense of Definition~\ref{def:resCclassical}. 
\end{theorem}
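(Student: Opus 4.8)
The plan is to reduce the statement to a single residualizing construction. Writing $S=\{g_1,\dots,g_n\}$, separating the elements of $S$ in some quotient is the same as sending each of the finitely many nontrivial elements of $T=\{g_ig_j^{-1}\mid i\neq j\}$ to a nontrivial element, so it suffices to produce, for an arbitrary finite set $T\subseteq H\setminus\{1\}$, an \emph{onto} homomorphism $\phi\colon H\to Q$ with $Q\in\mathcal{R}$ and $1\notin\phi(T)$. Since $\Gamma\in\mathcal{R}$ is commutative transitive and commutative transitivity passes to subgroups, $H$ is commutative transitive; together with Theorem~\ref{thm:generalizedBB} this means it would even be enough to treat one nontrivial element at a time (that is, to prove that $H$ is residually-$\mathcal{R}$), but since the construction below handles any finite $T$ uniformly I carry it out directly.

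First I would dispose of the degenerate case. If $H$ is elementary then, because $\Gamma$ is torsion-free with abelian peripheral subgroups, $H$ is either infinite cyclic or a finitely generated parabolic subgroup, and in both cases $H\cong\mathbb{Z}^k$ for some $k\ge 0$. A finitely generated free abelian group is torsion-free and hyperbolic relative to itself, hence lies in $\mathcal{R}$, so the identity map $H\to H$ is an onto homomorphism onto a group in $\mathcal{R}$ that separates every finite set. I may therefore assume that $H$ is non-elementary.

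The heart of the proof is the non-elementary case, where I would use the residualizing technique of Ol'shanskii~\cite{Olshanskii93}, in the form available over relatively hyperbolic groups through the small cancellation theory of Osin~\cite{Osin2010}. A non-elementary finitely generated subgroup $H$ of the torsion-free group $\Gamma$ is a suitable subgroup: it contains two independent loxodromic elements and, being torsion-free, no obstructing finite normal subgroups arise. Fixing a finite generating set $a_1,\dots,a_s$ of $\Gamma$, I would build words $w_1,\dots,w_s\in H$ satisfying a sufficiently strong small cancellation condition over $\Gamma$ so that the quotient $\bar\Gamma=\Gamma/\langle\langle a_1w_1^{-1},\dots,a_sw_s^{-1}\rangle\rangle$ is again relatively hyperbolic with respect to the images of the peripheral subgroups, the natural map $\phi\colon\Gamma\to\bar\Gamma$ is injective on the prescribed finite set $T$, and every $a_i$ becomes equal to $\phi(w_i)\in\phi(H)$. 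The last condition forces $\phi(H)=\bar\Gamma$, so the restriction $\phi|_H\colon H\to\bar\Gamma$ is onto and separates $T$, as required.

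The main obstacle is verifying that the quotient $\bar\Gamma$ actually lies in $\mathcal{R}$, that is, that it is torsion-free with abelian peripheral subgroups, while simultaneously retaining relative hyperbolicity, the surjectivity $\phi(H)=\bar\Gamma$, and the survival of $T$. The peripheral subgroups of $\bar\Gamma$ are images of the abelian peripheral subgroups of $\Gamma$ and hence abelian; the delicate points are to choose the relators $a_iw_i^{-1}$ so that they are not proper powers and are transversal to the peripheral structure (with their $H$-parts loxodromic), which keeps the peripheral subgroups free abelian and, via the control of torsion in Osin's small cancellation quotients, keeps $\bar\Gamma$ torsion-free. Assembling relative hyperbolicity, torsion-freeness, the preserved abelian peripheral structure, surjectivity of $H$, and injectivity on $T$ all at once is where the toral hypothesis on $\Gamma$ is indispensable and where the real work lies.
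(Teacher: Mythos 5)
Your proposal is correct and follows essentially the same route as the paper: the same case split (cyclic and parabolic subgroups are free abelian of finite rank and already lie in $\mathcal{R}$; otherwise $H$ is suitable by torsion-freeness) followed by the Ol'shanskii--Osin residualizing quotient in which a finite generating set of $\Gamma$ is forced into $H$, giving surjectivity of $\phi|_H$ together with injectivity on the prescribed finite set. The ``real work'' you defer in your final paragraph is exactly what the paper imports as a black box, namely Theorem~\ref{lemma:residual} (Osin's Theorem~2.4 combined with Lemma~5.1 of \cite{Osin2010}), whose properties (1), (3) and (5) already deliver relative hyperbolicity of $\bar{\Gamma}$, the intact torsion-free abelian peripheral structure, and infinite order of all hyperbolic elements, hence $\bar{\Gamma}\in\mathcal{R}$ with no further small cancellation argument needed.
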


The second change that one makes to extend the algebraic geometry over free groups to (relatively) hyperbolic groups, is that one's attention restricts to homomorphisms into a fixed group $\Gamma$ from the given class. Accordingly, the terminology changes to (fully) residually-$\Gamma$ groups and $\Gamma$-limit groups. 

In this context, $\Gamma$-limit groups are precisely finitely generated fully residually-$\Gamma$ groups, when the group $\Gamma$ is either hyperbolic, as was shown by Sela in~\cite{SelaElemTheoryHypGps} for the torsion-free case and by Reinfeldt and Weidmann in~\cite{RW} for all hyperbolic groups, or $\Gamma\in\mathcal{R}$, as was proved by Groves in~\cite{GrovesLimitRelHypGps05}. Yet more generally, the same result holds if $\Gamma$ is an equationally Notherian group, as was proved by Ould Houcine in~\cite{OuldH}.

We do not need to restrict to a fixed group $\Gamma\in\mathcal{R}$ to prove the generalization of Baumslag's theorem. However, we do that when we try to study the class of fully residually-$\mathcal{R}$ groups.

Note that usually, finitely generated (fully) residually-$\mathcal{C}$ groups do not have to be in $\mathcal{C}$. For instance, infinite linear groups are residually finite, and finitely generated free abelian groups are fully residually free. However, for some classes $\mathcal{C}$, a group $G$ is fully residually-$\mathcal{C}$ if and only if $G\in\mathcal{C}$. An example is the class $\mathcal{A}$ of all abelian groups. Indeed, let a group $G$ be fully residually-$\mathcal{A}$, and suppose there are $a,b\in G$ such that $[a,b]\neq 1$. Clearly, both $a$ and $b$ are nontrivial. Then there is an abelian group $H$ and a homomorphism $\phi\colon G\rightarrow H$ such that $\phi(a)$, $\phi(b)$ and $\phi([a,b])=[\phi(a),\phi(b)]$ are all nontrivial in $H$, which is a contradiction. It follows that $G$ is fully residually-$\mathcal{A}$ if and only if $G\in\mathcal{A}$. In Section \ref{sec:Toral-LimitGps} we show that the class of finitely generated toral relatively hyperbolic groups is closed under extensions of centralizers (see Definition~\ref{def:ext-centralizers}) and deduce the following theorem from a statement proved by Kharlampovich and Miasnikov in~\cite{KharlampovichMyasnikov09}.

\begin{theorem}\label{thm:fresGammaembeds}
Let $\Gamma$ be a finitely generated toral relatively hyperbolic group, and let $G$ be a finitely generated fully residually-$\Gamma$ group. Then $G$ embeds into a toral relatively hyperbolic group.
\end{theorem}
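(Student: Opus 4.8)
The plan is to derive the theorem from two facts: the Kharlampovich--Miasnikov embedding theorem of~\cite{KharlampovichMyasnikov09}, and the closure of $\mathcal{R}$ under extension of centralizers (Definition~\ref{def:ext-centralizers}). Granting these, the proof is a short induction; the real work is in the closure statement, which I sketch below.

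First I would dispose of the case in which $\Gamma$ is abelian, since~\cite{KharlampovichMyasnikov09} is stated for non-abelian non-elementary $\Gamma$. A group in $\mathcal{R}$ is torsion-free, and an elementary toral relatively hyperbolic group is virtually cyclic or peripheral, hence abelian; therefore a non-abelian $\Gamma\in\mathcal{R}$ is automatically non-elementary, and only the abelian case needs separate treatment. If $\Gamma$ is abelian it is finitely generated and torsion-free, i.e. $\Gamma\cong\mathbb{Z}^n$. Applying the fully residually hypothesis to the pair $\{1,[a,b]\}$ for arbitrary $a,b\in G$ produces a homomorphism $\phi\colon G\to\mathbb{Z}^n$ with $\phi([a,b])\neq 1$, which is absurd because $\phi([a,b])=[\phi(a),\phi(b)]=1$; hence $G$ is abelian, and being residually-$\mathbb{Z}^n$ it is torsion-free, so $G\cong\mathbb{Z}^m\in\mathcal{R}$ and there is nothing to prove.

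The core is the lemma that $\mathcal{R}$ is closed under extension of centralizers: if $\Gamma\in\mathcal{R}$ and $C=C_\Gamma(g)$ is the centralizer of a nontrivial element (which is maximal abelian, by the commutative transitivity of groups in $\mathcal{R}$ established earlier), then $\Gamma\ast_C(C\times\mathbb{Z}^k)$ again belongs to $\mathcal{R}$. To prove this I would distinguish the two possible types of the maximal abelian subgroup $C$. If $C$ is parabolic it is conjugate to a peripheral subgroup $P\cong\mathbb{Z}^r$; here I would verify directly that $\Gamma\ast_C(C\times\mathbb{Z}^k)$ is hyperbolic relative to the family obtained by replacing $P$ with $P\times\mathbb{Z}^k\cong\mathbb{Z}^{r+k}$, the remaining peripherals being unchanged, which keeps the group torsion-free with abelian peripherals and hence toral. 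If $C$ is loxodromic, then torsion-freeness forces $C\cong\mathbb{Z}$; I would first adjoin this maximal loxodromic subgroup to the peripheral structure of $\Gamma$, reducing to the parabolic situation, and then realize $\Gamma\ast_C(C\times\mathbb{Z}^k)$ as an amalgam over the now-peripheral $C$ and invoke a combination theorem for relatively hyperbolic groups (such as Dahmani's, or a statement extracted from the machinery of~\cite{Osin2010}) to conclude that the amalgam is toral relatively hyperbolic. I expect this loxodromic case---the passage through the enlarged peripheral structure together with the combination theorem---to be the main obstacle, since it requires care that relative quasiconvexity of $C$ and the toral form of the peripheral family are preserved.

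With the closure lemma available, the theorem follows at once. By~\cite{KharlampovichMyasnikov09}, $G$ embeds into a group $\Gamma_m$ obtained from $\Gamma=\Gamma_0$ by a finite sequence of extensions of centralizers $\Gamma_i=\Gamma_{i-1}\ast_{C_i}(C_i\times\mathbb{Z}^{k_i})$. Since $\Gamma_0\in\mathcal{R}$ and each such extension preserves membership in $\mathcal{R}$, an induction on $i$ gives $\Gamma_m\in\mathcal{R}$. Thus $G$ embeds into the toral relatively hyperbolic group $\Gamma_m$, as claimed.
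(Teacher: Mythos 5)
Your proposal is correct and follows essentially the same route as the paper: dispose of the abelian case directly, invoke the Kharlampovich--Miasnikov embedding into an iterated extension of centralizers, and prove closure of $\mathcal{R}$ under such extensions by splitting into the parabolic case and the hyperbolic (loxodromic) case, where one first adjoins the maximal cyclic subgroup $E(u)=C_\Gamma(u)$ to the peripheral structure via Theorem~\ref{thm:Osin}(3) and then applies a combination theorem for amalgams over a peripheral subgroup. The only cosmetic differences are that the paper handles both cases uniformly with Osin's combination theorem (Corollary~1.5 of~\cite{Osin06}, via Corollary~\ref{cor:amal-hyp-rel}) rather than Dahmani's or the machinery of~\cite{Osin2010}, and uses rank-one extensions $C\times\langle t\rangle$ iterated, which subsumes your $C\times\mathbb{Z}^k$; the quasiconvexity worries you anticipate do not arise once $C$ is peripheral.
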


Note that finitely generated fully residually free groups are toral relatively hyperbolic; this follows from combination theorems proved by Dahmani~\cite{DahmaniCombinationThm} and Alibegovi\v{c}~\cite{Alibegovic}. On the other hand, neither this latter statement, nor even the conclusion of Theorem~\ref{thm:fresGammaembeds} holds for torsion-free finitely generated fully residually-$\mathcal{H}$ groups, where $\mathcal{H}$ is the class of hyperbolic groups. Indeed, there are Baumslag-Solitar fully residually-$\mathcal{H}$ groups that do not embed into any group in $\mathcal{R}$; please see Section \ref{sec:BS-gps} for details.

\section{Relatively hyperbolic groups}\label{sec:RelHypGps}

\subsection{Definitions}

We begin with the definition of a relatively hyperbolic group that was originally introduced by Osin in \cite{OsinBook}. 

Let $G$ be a group, $\{H_{\lambda}, \lambda \in \Lambda\}$ be a collection of subgroups of $G$, and $X$ be a subset of $G$. We say that $X$ is a \emph{relative generating set of $G$ with respect to $\{H_{\lambda}, \lambda \in \Lambda\}$} if $G$ is generated by the set $(\bigcup_{\lambda \in \Lambda}H_{\lambda}) \cup X$ where $X$ is assumed to be symmetric, i.e., $X = X^{-1}$. In this situation, there exists a natural homomorphism $$\eta  : F_{G} = (*_{\lambda \in \Lambda}H_{\lambda})*F(X) \rightarrow G,$$ where $F(X)$ is the free group with the basis $X$. Let $N$ denote the kernel of $\eta$. If $N$ is a normal closure of a subset $\mathscr{R} \subseteq N$ in the group $F_{G}$, we say that $G$ has the \emph{relative presentation} $$\langle X, H_{\lambda}, \lambda \in \Lambda \mid R = 1, R \in \mathscr{R}\rangle $$ \emph{with respect to} $\{H_{\lambda}, \lambda \in \Lambda\}$. For brevity, we write $$\langle X, H_{\lambda}, \lambda \in \Lambda \mid \mathscr{R}\rangle .$$ The relative presentation is \emph{finite} if the sets $X$ and $\mathscr{R}$ are finite. The group $G$ is called \emph{finitely presented relative to $\{H_{\lambda}, \lambda \in \Lambda\}$} if $G$ has a finite relative presentation  with respect to $\{H_{\lambda}, \lambda \in \Lambda\}$.

Let $\mathscr{H} = \bigsqcup_{\lambda \in \Lambda}(H_{\lambda} \setminus \{1\})$. Given a word $W$ over $X \cup \mathscr{H}$ such that  $W$ represents the identity in $G$, there exists an expression $$W =_{F_{G}} \prod_{i=1}^{k}f_{i}^{-1}R_{i}^{\pm 1}f_{i}$$ with the equality in the group $F_{G}$, where $R_{i} \in \mathscr{R}$, $f_{i} \in F_{G}$ for $i = 1, \cdots , k$. The smallest possible $k$ is called the \emph{relative area} of $W$ and is denoted by $Area^{rel}(W)$.

\begin{definition}\rm
We say that $G$ is \emph{hyperbolic relative to $\{H_{\lambda}, \lambda \in \Lambda\}$} if $G$ is finitely presented relative to $\{H_{\lambda}, \lambda \in \Lambda\}$ and there is a constant $\alpha > 0$ such that for any word $W$ of finite length over $X \cup \mathscr{H}$, representing the identity in $G$, we have $Area^{rel}(W) \leq \alpha\lVert W \rVert_{X \cup \mathscr{H}}$, where $\lVert W \rVert_{X \cup \mathscr{H}}$ denotes the shortest length of the word from the free monoid generated by $X \cup \mathscr{H}$ and is called the \emph{relative length of} $W$ \emph{with respect to} $\mathscr{H}$.
\end{definition}

\begin{remark}\label{remark:fg-hyp}
Note that the collection $\{H_{\lambda}, \lambda \in \Lambda\}$ does not have to be finite and that $G$ and $H_{\lambda}$ for all $\lambda \in \Lambda$ need not be finitely presented or even finitely generated. However, if the group $G$ is finitely generated and finitely presented relative to $\{H_{\lambda}, \lambda \in \Lambda\}$, then by \cite[Theorem 1.1]{OsinBook}, the collection of subgroups is finite and each subgroup $H_{\lambda}$ is finitely generated. In this case, Osin's definition is equivalent to the definition of Bowditch \cite{Bowditch98} and to that of Farb \cite{Farb98} (including the BCP property), by \cite[Appendix]{OsinBook}. Dahmani and Guirardel proved in~\cite{DahmaniGuirardel} that if $G$ is finitely presented then each subgroup $H_{\lambda}$ is finitely presented.
\end{remark}

\begin{definition}\rm \label{def:parabolic}
Let a group $G$ be hyperbolic relative to a collection of subgroups $\mathcal{H}=\{H_{\lambda}, \lambda \in \Lambda\}$. By a \emph{maximal parabolic subgroup} of $G$ we mean any conjugate $P_{g,\lambda} = gH_{\lambda}g^{-1}$ for $g \in G$ and $\lambda\in \Lambda$. A subgroup is \textit{parabolic} if it is conjugate into one of the subgroups in $\mathcal{H}$. An element $x$ of $G$ is called \emph{parabolic} if it lies in some parabolic subgroup of $G$. Otherwise, $x$ is called \emph{hyperbolic}. 
\end{definition}

\begin{definition} \label{def:toral}\rm
A torsion-free group hyperbolic relative to a collection of abelian subgroups is called a \emph{toral relatively hyperbolic group}. We denote by $\mathcal{R}$ the class of finitely generated toral relatively hyperbolic groups.
\end{definition}

\subsection{Some properties}

Throughout this section, we fix a group $G$ hyperbolic relative to a collection of infinite subgroups $\mathcal{H}=\{H_{\lambda}, \lambda \in \Lambda\}$.

Recall that a subgroup $H$ of a group $\Gamma$ is called \emph{malnormal} if $H \cap gHg^{-1}$ is trivial for any $g \in \Gamma \setminus H$ and that $H$ is \emph{almost malnormal} if $H \cap gHg^{-1}$ has finitely many elements for any $g \in \Gamma \setminus H$. A group is called \emph{elementary} (or \emph{virtually cyclic}) if it contains a cyclic subgroup of finite index. Theorem~\ref{thm:Osin} below is a combination of several statements proved by Osin: \cite[Proposition 2.36]{OsinBook} and \cite[Theorem~1.5, Theorem~4.3, Corollary 1.6 and Corollary 1.7]{Osin04}.

\begin{theorem}\label{thm:Osin} (Osin) Let $G$ be a group hyperbolic relative to a collection $\mathcal{H}=\{H_{\lambda}, \lambda \in \Lambda\}$ of subgroups.
\begin{itemize}
\item[(1)] For any $g_{1}, g_{2} \in G$, the intersection $g_{1}H_{\lambda}g^{-1}_{1} \cap g_{2}H_{\mu}g^{-1}_{2}$ is finite whenever $\lambda,\mu\in\Lambda$ and $\lambda \neq \mu$. The intersection $gH_{\lambda}g^{-1} \cap H_{\lambda}$ is finite for any $g \notin H_{\lambda}$; in particular, every subgroup from $\mathcal{H}$ is almost malnormal.
\item[(2)] Every hyperbolic element $g$ of infinite order in $G$ is contained in a unique maximal elementary subgroup, namely in $$E(g) = \{f \in G; f^{-1}g^{n}f = g^{\pm n} \text{ for some positive integer } n\}.$$ The subgroup $E(g)$ is hyperbolic and almost malnormal.
\item[(3)] The group $G$ is hyperbolic relative to $\mathcal{H} \cup \{E(g)\}$.
\end{itemize}
\end{theorem}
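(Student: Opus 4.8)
The plan is to verify the theorem part by part, matching each assertion to the corresponding statement in Osin's two papers; since this is a compilation of known results rather than a result requiring fresh argument, the task is to identify which cited statement yields each clause and to reconcile the conventions between the two sources.

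For part (1), I would invoke \cite[Proposition 2.36]{OsinBook}, which records precisely the finiteness of the intersections $g_{1}H_{\lambda}g_{1}^{-1} \cap g_{2}H_{\mu}g_{2}^{-1}$ for $\lambda \neq \mu$ and of $gH_{\lambda}g^{-1} \cap H_{\lambda}$ for $g \notin H_{\lambda}$. The almost malnormality of each $H_{\lambda}$ is then immediate from the latter statement: for $g \in G \setminus H_{\lambda}$ the intersection $gH_{\lambda}g^{-1} \cap H_{\lambda}$ is finite by hypothesis, which is exactly the defining condition.

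For part (2), the existence and uniqueness of a maximal elementary subgroup $E(g)$ containing a hyperbolic element $g$ of infinite order follows from \cite[Theorem 1.5]{Osin04}; the explicit description $E(g) = \{f \in G : f^{-1}g^{n}f = g^{\pm n} \text{ for some } n\}$ is \cite[Corollary 1.7]{Osin04}; and the assertions that $E(g)$ is itself hyperbolic as an abstract group and almost malnormal in $G$ are \cite[Corollary 1.6]{Osin04}. For part (3), the relative hyperbolicity of $G$ with respect to the enlarged collection $\mathcal{H} \cup \{E(g)\}$ is \cite[Theorem 4.3]{Osin04}.

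The only points requiring care — and the reason the statement is packaged as a single theorem — are the compatibility of conventions between the two sources and the relative-hyperbolicity hypotheses. I would check that ``hyperbolic element'' in Definition~\ref{def:parabolic} agrees with Osin's usage, namely an element of infinite order that is not conjugate into any $H_{\lambda}$, so that $E(g)$ is genuinely a new, non-parabolic peripheral subgroup when passing to part (3), and that adjoining $E(g)$ preserves both the finite relative presentation and the linear relative isoperimetric inequality. These verifications are exactly the content of the cited theorems, so the proof reduces to assembling them with conventions reconciled, and I do not anticipate any genuinely new obstacle.
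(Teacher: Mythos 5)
Your proposal matches the paper exactly: the paper gives no independent proof of Theorem~\ref{thm:Osin}, stating only that it is ``a combination of several statements proved by Osin: \cite[Proposition 2.36]{OsinBook} and \cite[Theorem~1.5, Theorem~4.3, Corollary 1.6 and Corollary 1.7]{Osin04},'' and your clause-by-clause attribution assigns each part to precisely these same sources. Your additional remarks about reconciling conventions are sensible housekeeping but introduce nothing beyond what the paper's citation already encodes.
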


We have immediate consequences as follows.

\begin{corollary}\label{cor:hyp-rel-par}
The collection $\mathcal{H}$ contains exactly one representative of each conjugacy class of maximal infinite parabolic subgroups of $G$.
\end{corollary}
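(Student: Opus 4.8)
The plan is to establish two claims: first, that every conjugacy class of maximal infinite parabolic subgroups of $G$ is represented by some member of $\mathcal{H}$; and second, that no two distinct members of $\mathcal{H}$ lie in the same conjugacy class. Together these say precisely that $\mathcal{H}$ meets each such conjugacy class in exactly one subgroup.

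For the first claim I would appeal directly to Definition~\ref{def:parabolic}. By that definition every maximal parabolic subgroup of $G$ has the form $P_{g,\lambda} = gH_{\lambda}g^{-1}$ for some $g \in G$ and $\lambda \in \Lambda$, so it is a conjugate of the subgroup $H_{\lambda} \in \mathcal{H}$. Taking $g = 1$ shows that each $H_{\lambda}$ is itself a maximal parabolic subgroup, and since we assume throughout this section that the subgroups in $\mathcal{H}$ are infinite, each $H_{\lambda}$ is in fact a maximal infinite parabolic subgroup. Hence the conjugacy class of any maximal infinite parabolic subgroup contains the member $H_{\lambda}$ of $\mathcal{H}$ to which it is conjugate, which gives existence of a representative.

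For the second claim I would argue by contradiction, using the almost malnormality recorded in Theorem~\ref{thm:Osin}(1). Suppose $\lambda \neq \mu$ but $H_{\lambda}$ and $H_{\mu}$ are conjugate, say $H_{\lambda} = gH_{\mu}g^{-1}$ for some $g \in G$. Applying the first assertion of Theorem~\ref{thm:Osin}(1) with $g_{1} = 1$ and $g_{2} = g$ shows that $H_{\lambda} \cap gH_{\mu}g^{-1}$ is finite. But the assumed equality $H_{\lambda} = gH_{\mu}g^{-1}$ forces this intersection to equal $H_{\lambda}$, which is infinite --- a contradiction. Therefore $\lambda = \mu$, so distinct indices yield non-conjugate subgroups, and each conjugacy class contains at most one member of $\mathcal{H}$.

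I do not expect a genuine obstacle here: the statement is essentially immediate once the definition of a maximal parabolic subgroup is unwound, and the only substantive input is the finiteness of the cross-intersections of distinct parabolics supplied by Theorem~\ref{thm:Osin}(1). The single point that warrants care is the bookkeeping among the notions \emph{parabolic}, \emph{maximal parabolic}, and the standing hypothesis that the $H_{\lambda}$ are infinite, so as to be certain that the subgroups $H_{\lambda}$ themselves are exactly the maximal infinite parabolic subgroups up to conjugacy, rather than proper subgroups of such.
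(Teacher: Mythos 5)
Your proof is correct and follows essentially the same route as the paper's: existence of a representative comes directly from unwinding Definition~\ref{def:parabolic}, and uniqueness from the finiteness of cross-intersections of distinct parabolics in Theorem~\ref{thm:Osin}(1) combined with the standing assumption that the subgroups in $\mathcal{H}$ are infinite. The paper's argument is simply a terser version of yours, so nothing further is needed.
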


\begin{proof}
It is immediate from Theorem~\ref{thm:Osin} (1) that infinite subgroups $H_{\lambda}\in\mathcal{H}$ and $H_{\mu}\in\mathcal{H}$ belong to distinct conjugacy classes, for all $\lambda\neq\mu$. On the other hand, every maximal parabolic subgroup $P$ of $G$ belongs to the conjugacy class of one of the subgroups in $\mathcal{H}$, by Definition~\ref{def:parabolic}. So, each conjugacy class of maximal parabolic subgroups has to have a representative in $\mathcal{H}$.
\end{proof}

\begin{corollary}\label{cor:parabolic-malnormal}
Every maximal parabolic subgroup $P$ of $G$ is almost malnormal. If $G$ is torsion-free then $P$ is malnormal.
\end{corollary}

\begin{proof}
Let $P = gH_{\lambda}g^{-1}$ for some $g \in G$ and $H_{\lambda} \in \mathcal{H}$; let $x \in G \setminus P$. Then $g^{-1}xg \notin H_{\lambda}$. Thus, $g^{-1}(P \cap xPx^{-1})g = H_{\lambda} \cap (g^{-1}xg)H_{\lambda}(g^{-1}xg)^{-1}$ has finitely many elements because $H_{\lambda}$ is almost malnormal, by Theorem \ref{thm:Osin} (1). Hence, $P$ is almost malnormal. The second part of the  statement is straightforward.
\end{proof}

\begin{corollary}\label{cor:parabolic-element}
Let $x$ be a parabolic element of infinite order in $G$. Suppose that $[x,y] = 1$ for $y \in G$. Then $y$ lies in the same maximal parabolic subgroup as $x$. 
\end{corollary}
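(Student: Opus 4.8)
The plan is to first pin down a maximal parabolic subgroup $P$ containing $x$, then use the commuting relation to show that $x$ lies in $P\cap yPy^{-1}$, and finally invoke almost malnormality to force $y\in P$.

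First I would use that $x$ is parabolic to fix a maximal parabolic subgroup $P=gH_{\lambda}g^{-1}$ with $x\in P$. Since $x$ has infinite order, this $P$ is in fact the \emph{unique} maximal parabolic subgroup containing $x$: if $x$ also lay in a maximal parabolic $P'$, then the cyclic group $\langle x\rangle$ of infinite order would sit inside $P\cap P'$, and Theorem~\ref{thm:Osin}(1) (equivalently, the almost malnormality in Corollary~\ref{cor:parabolic-malnormal}) forces distinct maximal parabolic subgroups to meet in a finite set, so $P'=P$. This is what makes the phrase ``the same maximal parabolic subgroup as $x$'' meaningful, so it should be remarked on explicitly.

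Next I would exploit $[x,y]=1$. Rewriting it as $yxy^{-1}=x$ and using $x\in P$ gives $x=yxy^{-1}\in yPy^{-1}$, where $yPy^{-1}$ is again a maximal parabolic subgroup, being a conjugate of $P$. Hence $x\in P\cap yPy^{-1}$, and since $x$ has infinite order we even have $\langle x\rangle\subseteq P\cap yPy^{-1}$, so this intersection is infinite. By Corollary~\ref{cor:parabolic-malnormal}, $P$ is almost malnormal, meaning $P\cap yPy^{-1}$ is finite for every $y\in G\setminus P$; as our intersection is infinite, we conclude $y\in P$, which is precisely the claim.

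The argument is short, and the only real subtlety---the step to watch---is the bookkeeping that converts commutativity into membership in a conjugate: one must verify that $yPy^{-1}$ is a maximal parabolic subgroup and that the single infinite-order element $x$ already makes $P\cap yPy^{-1}$ infinite. Both are immediate, so the proof reduces to a direct application of the almost malnormality established above, together with the uniqueness of $P$ in the first step.
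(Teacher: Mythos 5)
Your proof is correct and takes essentially the same approach as the paper: from $[x,y]=1$ one gets $x\in P\cap yPy^{-1}$, this intersection is infinite since it contains the infinite-order element $x$, and almost malnormality of $P$ (Corollary~\ref{cor:parabolic-malnormal}) then forces $y\in P$. Your explicit justification that $P$ is the \emph{unique} maximal parabolic subgroup containing $x$ is a small worthwhile addition that the paper's proof leaves implicit.
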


\begin{proof}
Let $P$ be the maximal parabolic subgroup containing $x$. Assume that $y \notin P$. Note that $x \in yPy^{-1}$ as $[x,y]=1$. Then $x \in P \cap yPy^{-1}$, so that the intersection has infinitely many elements, which is a contradiction.  Therefore, $y \in P$.
\end{proof}

We denote the normalizer of a subgroup $H$ of a group $\Gamma$ by $N_{\Gamma}(H)$.

\begin{corollary}\label{cor:normalizer}
If $P$ is an infinite maximal parabolic subgroup of $G$, then $N_{G}(P) = P$. If $g$ is an infinite order hyperbolic element of $G$ then $N_{G}(E(g)) = E(g)$.
\end{corollary}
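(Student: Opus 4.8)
The plan is to observe that both statements follow from a single general principle: an infinite almost malnormal subgroup is self-normalizing. Recall that in either case the inclusion $P \subseteq N_{G}(P)$ (respectively $E(g) \subseteq N_{G}(E(g))$) is automatic, since every element of a subgroup normalizes it. So the entire content is the reverse inclusion, and I would prove both parts by the same short contradiction argument.

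For the first part, I would take an arbitrary $h \in N_{G}(P)$ and suppose toward a contradiction that $h \notin P$. By definition of the normalizer we have $hPh^{-1} = P$, and therefore $P \cap hPh^{-1} = P$. Since $P$ is an infinite parabolic subgroup, this intersection is infinite. On the other hand, Corollary~\ref{cor:parabolic-malnormal} asserts that $P$ is almost malnormal, so for $h \notin P$ the intersection $P \cap hPh^{-1}$ must be finite. This contradiction forces $h \in P$, giving $N_{G}(P) \subseteq P$ and hence the desired equality.

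For the second part the argument is verbatim the same, with $E(g)$ in place of $P$: the subgroup $E(g)$ is infinite because it contains the infinite cyclic group $\langle g \rangle$ (as $g$ has infinite order), and it is almost malnormal by Theorem~\ref{thm:Osin}(2). Thus if $h \in N_{G}(E(g))$ with $h \notin E(g)$, then $E(g) \cap hE(g)h^{-1} = E(g)$ is infinite while almost malnormality demands it be finite, a contradiction; hence $N_{G}(E(g)) = E(g)$.

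There is no serious obstacle here: the proof is essentially immediate once one pairs the hypothesis that the relevant subgroup is infinite with the almost malnormality already established in Theorem~\ref{thm:Osin} and Corollary~\ref{cor:parabolic-malnormal}. The only point requiring any care is being explicit that $P$ and $E(g)$ are genuinely infinite, so that the equality $P \cap hPh^{-1} = P$ really does contradict the finiteness guaranteed by almost malnormality; this is where the words \emph{infinite} in the statement and \emph{infinite order} in the hypothesis on $g$ are used.
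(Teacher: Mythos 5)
Your proof is correct, and for the first assertion it is word-for-word the paper's argument: take $x\in N_G(P)\setminus P$, note $P\cap xPx^{-1}=P$ is infinite, and contradict almost malnormality from Corollary~\ref{cor:parabolic-malnormal}. Where you diverge is the second assertion. You rerun the same contradiction directly on $E(g)$, using the almost malnormality of $E(g)$ stated in Theorem~\ref{thm:Osin}(2) together with the observation that $E(g)\supseteq\langle g\rangle$ is infinite. The paper instead invokes Theorem~\ref{thm:Osin}(3): since $G$ is hyperbolic relative to $\{H_\lambda,\lambda\in\Lambda\}\cup\{E(g)\}$, the subgroup $E(g)$ becomes an infinite maximal parabolic subgroup of the augmented peripheral structure, so the second assertion is literally an instance of the first. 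Both routes are valid and equally short. Yours is slightly more self-contained, as it needs only part (2) of Theorem~\ref{thm:Osin} and isolates the real mechanism, namely that an infinite almost malnormal subgroup is self-normalizing; the paper's reduction buys uniformity and showcases the device of adjoining elementary closures $E(g)$ to the peripheral collection, a trick it reuses later (for instance in the proof of Proposition~\ref{extension-centralizers}). You are also right to flag that the infiniteness of the subgroup is the one hypothesis that must be made explicit, since the almost malnormality conclusion is only that the intersection is finite, not trivial.
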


\begin{proof}
Suppose that $x \in N_{G}(P) \setminus P$. Then the intersection $P \cap xPx^{-1} = P$ is infinite, while $P$ is almost malnormal, by Corollary \ref{cor:parabolic-malnormal}. The contradiction implies that $N_{G}(P) \subseteq P$. Clearly, $P \subseteq N_{G}(P)$. So, $N_{G}(P) = P$. The second assertion now follows because $G$ is hyperbolic relative to $\{H_{\lambda}, \lambda \in \Lambda\} \cup \{E(g)\}$, by Theorem~\ref{thm:Osin}(3).
\end{proof}

\begin{corollary}
Let $g$ be a hyperbolic element of infinite order in $G$. Suppose that $A$ is an infinite subgroup of $E(g)$. Then $N_{G}(A)$ is a subgroup of $N_{G}(E(g))$. 
\end{corollary}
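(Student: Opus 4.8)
The plan is to exploit the almost malnormality of $E(g)$ together with the already-established fact that $N_{G}(E(g)) = E(g)$. I would fix an arbitrary element $x \in N_{G}(A)$ and aim to show that $x \in E(g)$. Since $E(g) = N_{G}(E(g))$ by Corollary~\ref{cor:normalizer} (using that $g$ is a hyperbolic element of infinite order), establishing $x \in E(g)$ for every $x \in N_{G}(A)$ immediately yields the desired inclusion $N_{G}(A) \subseteq N_{G}(E(g))$.

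The crucial step is to manufacture a large intersection of conjugates of $E(g)$ out of the normalizing condition. Because $x$ normalizes $A$, we have $xAx^{-1} = A$. On the one hand $A \subseteq E(g)$ by hypothesis; on the other hand, conjugating this containment by $x$ gives $A = xAx^{-1} \subseteq xE(g)x^{-1}$. Combining the two containments, $A \subseteq E(g) \cap xE(g)x^{-1}$, and since $A$ is infinite, the intersection $E(g) \cap xE(g)x^{-1}$ is infinite as well.

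I would then invoke Theorem~\ref{thm:Osin}(2), which tells us that $E(g)$ is almost malnormal, i.e.\ $E(g) \cap xE(g)x^{-1}$ is finite whenever $x \notin E(g)$. Since the previous step shows this intersection is infinite, we are forced to conclude $x \in E(g)$. As $x$ was an arbitrary element of $N_{G}(A)$, this gives $N_{G}(A) \subseteq E(g) = N_{G}(E(g))$, completing the argument.

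I do not expect a serious obstacle: the proof is a direct application of almost malnormality, and the only point requiring care is tracking that conjugation by $x$ sends $A$ into $xE(g)x^{-1}$ while $A$ itself still sits inside $E(g)$, so that both conjugates share the infinite subgroup $A$. The hypotheses are used exactly where one would expect, namely the infiniteness of $A$ is what makes the shared intersection infinite, and the infinite order of $g$ is what guarantees that $E(g)$ exists, is almost malnormal, and is self-normalizing.
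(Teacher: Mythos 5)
Your proof is correct and follows essentially the same route as the paper's: both arguments use $xAx^{-1} = A \subseteq E(g)\cap xE(g)x^{-1}$ together with the infiniteness of $A$, the almost malnormality of $E(g)$ from Theorem~\ref{thm:Osin}(2), and the identity $N_{G}(E(g)) = E(g)$ from Corollary~\ref{cor:normalizer}. The only difference is cosmetic: you argue directly that each $x \in N_{G}(A)$ lies in $E(g)$, while the paper phrases the same argument as a proof by contradiction.
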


\begin{proof}
Assume that $x \in N_{G}(A) \setminus N_{G}(E(g))$. By Corollary \ref{cor:normalizer}, $N_{G}(E(g)) = E(g)$; and so $x \notin E(g)$. Thus, the cardinality of $A = xAx^{-1} \cap A \subseteq xE(g)x^{-1} \cap E(g)$ is infinite, which contradicts to $E(g)$ being almost malnormal by Theorem \ref{thm:Osin} (2). Therefore, $N_{G}(A) \subseteq N_{G}(E(g))$.
\end{proof}

\begin{lemma}\label{lemma:elementary-hyp}
Suppose that $G$ is torsion-free and that $g$ is a hyperbolic element of $G$. Then the maximal elementary subgroup $E(g) = \langle a \rangle$ is the maximal cyclic subgroup containing $g$. In particular, the centralizer $C_{G}(g) = \langle a \rangle$ of $g$ is cyclic.
\end{lemma}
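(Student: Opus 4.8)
The plan is to read the statement off Theorem~\ref{thm:Osin}(2), the only substantive point being the step from ``elementary and torsion-free'' to ``infinite cyclic.'' First I would note that since $G$ is torsion-free and $g$ is hyperbolic, $g$ is nontrivial (the identity lies in every parabolic subgroup) and hence of infinite order. Thus Theorem~\ref{thm:Osin}(2) applies and $g$ is contained in the maximal elementary subgroup $E(g) = \{f \in G : f^{-1}g^{n}f = g^{\pm n} \text{ for some } n \geq 1\}$. Being elementary, $E(g)$ has a cyclic subgroup of finite index; it is infinite because it contains $g$, and it is torsion-free as a subgroup of $G$.

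The key step is to show that an infinite, torsion-free, virtually cyclic group $V$ is infinite cyclic. I would replace the finite-index cyclic subgroup by its normal core, obtaining a normal $Z = \langle c \rangle \cong \mathbb{Z}$ of finite index in $V$, and study the conjugation action $\epsilon \colon V \to \mathrm{Aut}(Z) = \{\pm 1\}$. If $\epsilon$ is trivial then $Z$ is central of finite index, so $V$ is center-by-finite; by Schur's theorem $[V,V]$ is finite, hence trivial by torsion-freeness, so $V$ is abelian, and a finitely generated torsion-free abelian group containing a finite-index copy of $\mathbb{Z}$ is itself $\mathbb{Z}$. If $\epsilon$ is nontrivial, the orientation-preserving subgroup $V^{+} = \ker\epsilon$ has index two, is torsion-free, and has $Z$ as a central finite-index infinite cyclic subgroup, so $V^{+} = \langle d \rangle \cong \mathbb{Z}$ by the previous case; any $v \in V \setminus V^{+}$ then inverts $d$, while $v^{2} \in \langle d \rangle$ satisfies $v^{2} = v v^{2} v^{-1} = (v^{2})^{-1}$, forcing $v^{4} = 1$ and hence $v = 1$, a contradiction. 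So only the first case occurs, and applying this to $V = E(g)$ gives $E(g) = \langle a \rangle$ for some $a$.

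It then remains to identify $\langle a \rangle$ as the maximal cyclic subgroup containing $g$ and as $C_{G}(g)$. For any cyclic subgroup $\langle b \rangle$ containing $g$, the generator $b$ commutes with $g$, so $b^{-1}gb = g = g^{1}$ places $b \in E(g) = \langle a \rangle$ (take $n = 1$ in the defining condition); hence every cyclic subgroup containing $g$ lies in $\langle a \rangle$, which is therefore the unique maximal such subgroup. The same computation shows $C_{G}(g) \subseteq E(g) = \langle a \rangle$, while $\langle a \rangle$ is abelian and contains $g$, giving $\langle a \rangle \subseteq C_{G}(g)$; thus $C_{G}(g) = \langle a \rangle$ is cyclic.

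I expect the main obstacle to be the virtually-cyclic-to-cyclic reduction, since everything else is a direct unwinding of the definition of $E(g)$. One could instead invoke the classification of two-ended groups, but the self-contained Schur-theorem argument above stays entirely within the torsion-free setting and avoids importing that machinery.
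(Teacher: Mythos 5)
Your proof is correct and takes essentially the same route as the paper: Theorem~\ref{thm:Osin}(2) gives that $E(g)$ is virtually cyclic, torsion-freeness upgrades this to infinite cyclic, and the chain $\langle a \rangle \subseteq C_{G}(g) \subseteq E(g) = \langle a \rangle$ closes the argument. The only difference is that where the paper simply cites the well-known fact that a torsion-free virtually cyclic group is trivial or infinite cyclic (referencing Stallings), you prove it from scratch via the normal core and Schur's theorem — a correct, self-contained substitute for the citation.
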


\begin{proof} 
By Theorem \ref{thm:Osin} (2), $E(g)$ is a virtually cyclic group. It is well-known that every torsion-free virtually cyclic group is either trivial or infinite cyclic; for instance, see~\cite{Stallings}. Hence, $E(g)$ is the maximal cyclic subgroup containing $g$, that is, $E(g) = \langle a \rangle$ for some $a \in G$. Clearly, $\langle a \rangle\subseteq C_{G}(g) $. On the other hand, it follows from Theorem~\ref{thm:Osin} (2) that $C_{G}(g) \subseteq E(g)$; hence, $C_{G}(g) \subseteq \left\langle a\right\rangle $. So, $C_{G}(g) =\left\langle a\right\rangle $. 
\end{proof}

\begin{lemma}\label{lemma:abelian-subgp} Let $A$ be an abelian subgroup of $G$. Suppose that $x\in A$ has infinite order. If $x$ is parabolic, then $A$ is contained in a maximal parabolic subgroup; otherwise, $A$ is contained in $E(x)$.
\end{lemma}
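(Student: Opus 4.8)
The plan is to treat the two cases in the statement separately and, in each, to show that an arbitrary element of $A$ must lie in the target subgroup. The common engine is that, since $A$ is abelian, every element of $A$ commutes with $x$, and commuting with $x$ is exactly the hypothesis needed to invoke the malnormality-type results established above.

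First I would handle the parabolic case. Suppose $x$ is parabolic of infinite order, and let $P$ be the maximal parabolic subgroup containing $x$; this is the subgroup referred to in Corollary~\ref{cor:parabolic-element}. Fix an arbitrary $a \in A$. Since $A$ is abelian we have $[x,a]=1$, so Corollary~\ref{cor:parabolic-element}, applied with $y=a$, yields $a \in P$. As $a$ was arbitrary, $A \subseteq P$, which is the desired conclusion.

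Next I would handle the hyperbolic case. Suppose instead that $x$ is hyperbolic of infinite order. By Theorem~\ref{thm:Osin}(2), $x$ lies in the unique maximal elementary subgroup
$$E(x) = \{f \in G : f^{-1}x^{n}f = x^{\pm n} \text{ for some positive integer } n\}.$$
For any $a \in A$, the relation $[x,a]=1$ gives $a^{-1}xa = x$, so the defining condition for membership in $E(x)$ is satisfied with $n=1$. Hence $a \in E(x)$, and since $a$ was arbitrary, $A \subseteq E(x)$.

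No step presents a genuine obstacle, as the two corollaries and the explicit description of $E(x)$ do all the work; the only point requiring care is that the lemma does \emph{not} assume $G$ is torsion-free. Consequently, in the hyperbolic case I would argue directly from the set-theoretic description of $E(x)$ provided by Theorem~\ref{thm:Osin}(2), rather than from Lemma~\ref{lemma:elementary-hyp}, which is stated only for torsion-free $G$ and would give the stronger but here-unavailable conclusion that $E(x)$ is cyclic.
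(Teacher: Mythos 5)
Your proof is correct and takes essentially the same route as the paper: the parabolic case via Corollary~\ref{cor:parabolic-element} applied to each $a\in A$, and the hyperbolic case via the explicit description of $E(x)$ in Theorem~\ref{thm:Osin}(2) --- the paper packages your element-wise $n=1$ argument as $A\subseteq C_{G}(x)\subseteq E(x)$, which is the same observation. Your care in avoiding Lemma~\ref{lemma:elementary-hyp} (stated only for torsion-free $G$) matches the paper, which likewise does not invoke it here.
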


\begin{proof}
If $x$ is parabolic, then $x \in P \cap A$ for some maximal parabolic subgroup $P$; Corollary \ref{cor:parabolic-element} implies that $A \subseteq P$ as $A$ is abelian. If $x$ is hyperbolic, then the centralizer $C_{G}(x)$ of $x$ is contained in $E(x)$ by Theorem \ref{thm:Osin} (2); and note that $A \subseteq C_{G}(x)$. Hence, $A \subseteq E(x)$.
\end{proof}

\begin{remark}
It is well-known that a hyperbolic group contains no free abelian subgroups of rank 2, i.e., $\mathbb{Z} \times \mathbb{Z}$. Torsion-free relatively hyperbolic groups can contain subgroups isomorphic to $\mathbb{Z} \times \mathbb{Z}$ but each one of these has to lie in some parabolic subgroup, by Lemma \ref{lemma:abelian-subgp} and Lemma \ref{lemma:elementary-hyp}.
\end{remark}

\begin{corollary}\label{cor:abelian-normal-subsubgp}
Let $G$ be torsion-free, let $H$ be a non-elementary non-parabolic subgroup of $G$, and let $A$ be an abelian subgroup of $H$ which is normal in $H$. Then $A$ is trivial. In particular, every non-elementary non-parabolic subgroup of a torsion-free relatively hyperbolic group has trivial centre.
\end{corollary}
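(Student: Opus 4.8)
The plan is to argue by contradiction, assuming $A\neq 1$ and producing an element whose behaviour forces $H$ to be either parabolic or elementary. So suppose $A\neq 1$ and fix a nontrivial $x\in A$. Since $G$ is torsion-free, $x$ has infinite order, and by Definition~\ref{def:parabolic} the element $x$ is either parabolic or hyperbolic. I would treat the two cases separately but in parallel: in each case I locate a canonical (almost) malnormal subgroup $M$ containing all of $A$, and then use the normality of $A$ in $H$ to trap $H$ inside $M$.

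First, suppose $x$ is parabolic. Let $P$ be a maximal parabolic subgroup containing $x$. By Lemma~\ref{lemma:abelian-subgp}, since $A$ is abelian and contains the infinite-order parabolic element $x$, the whole of $A$ lies in $P$. Now fix $h\in H$. Because $A\trianglelefteq H$ we have $hxh^{-1}\in A\subseteq P$, while trivially $hxh^{-1}\in hPh^{-1}$; thus $hxh^{-1}$ is a nontrivial element of $P\cap hPh^{-1}$. Since $G$ is torsion-free, $P$ is malnormal by Corollary~\ref{cor:parabolic-malnormal}, so $P\cap hPh^{-1}=1$ whenever $h\notin P$. The nontriviality of $hxh^{-1}$ therefore forces $h\in P$, and as $h\in H$ was arbitrary, $H\subseteq P$, contradicting the hypothesis that $H$ is non-parabolic.

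Next, suppose $x$ is hyperbolic. Then by Lemma~\ref{lemma:abelian-subgp} we have $A\subseteq E(x)$, and $A$ is infinite since it contains the infinite-order element $x$. Fix $h\in H$. Normality of $A$ gives $A=hAh^{-1}\subseteq hE(x)h^{-1}$, and we also have $A\subseteq E(x)$, so $A\subseteq E(x)\cap hE(x)h^{-1}$; in particular this intersection is infinite. But $E(x)$ is almost malnormal by Theorem~\ref{thm:Osin}(2), so $E(x)\cap hE(x)h^{-1}$ is finite whenever $h\notin E(x)$; hence $h\in E(x)$. Again $h\in H$ was arbitrary, so $H\subseteq E(x)$, which is elementary (indeed infinite cyclic, by Lemma~\ref{lemma:elementary-hyp}), contradicting the hypothesis that $H$ is non-elementary.

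Both cases are impossible, so $A=1$. For the final assertion I would simply apply this to $A=Z(H)$: the centre of $H$ is an abelian subgroup that is normal (indeed characteristic) in $H$, hence trivial. I do not expect a serious obstacle here; the only point needing care is verifying that the conjugate $hxh^{-1}$ (respectively the subgroup $A$) genuinely lands inside both $M$ and $hMh^{-1}$, which is exactly what normality of $A$ together with Lemma~\ref{lemma:abelian-subgp} provides, and that the resulting intersection is large enough to invoke the relevant (almost) malnormality statement.
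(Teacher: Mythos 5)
Your proof is correct and follows essentially the same route as the paper's: both split into the parabolic and hyperbolic cases via Lemma~\ref{lemma:abelian-subgp} and then apply the (almost) malnormality of $P$ (Corollary~\ref{cor:parabolic-malnormal}) respectively of $E(x)$ (Theorem~\ref{thm:Osin}(2)). The only difference is organizational --- you trap every $h\in H$ inside $P$ or $E(x)$ to contradict the non-parabolic/non-elementary hypotheses, whereas the paper fixes a single $h\in H\setminus P$ (resp.\ $h\in H\setminus E(x)$) and concludes directly that $A\subseteq P\cap hPh^{-1}$ is trivial --- which is the same argument read in the contrapositive direction.
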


\begin{proof}
Since $A$ is abelian, we have two cases by Lemma \ref{lemma:abelian-subgp}. First, $A$ is contained in some maximal parabolic subgroup $P$; and so, $A \subseteq P \cap H$. Since $H \not\subseteq P$, there exists $h \in H\setminus P$ such that $A \subseteq hPh^{-1}$ because $A$ is normal in $H$. Hence, $A \subseteq P \cap hPh^{-1}$, which implies that $A$ is trivial by Corollary \ref{cor:parabolic-malnormal}. The second case is when $A \subseteq E(x) \cap H$ for some hyperbolic element $x \in G$. Note that $H \not\subseteq E(x)$ and that $E(x)$ is almost malnormal by Theorem \ref{thm:Osin} (2). A similar argument as in the first case shows that $A$ is trivial. The last assertion follows immediately.
\end{proof}

%%%%%%%%%%%%%%%%%%%%%%%%%%%%%%%%%%%%%%%%%%

\section{Residual homomorphisms}\label{sec:equivalent-def}

In this section we prove that Definitions \ref{def:resCclassical} and \ref{def:resCworking} over the class $\mathcal{R}$ are equivalent. Our proof is built upon results of Ol'shanskii and Osin. In \cite{Osin2010}, Osin generalizes the small cancellation theory over hyperbolic groups, developed by Ol'shanskii in \cite{Olshanskii93}, to relatively hyperbolic groups.

Let $G$ be a relatively hyperbolic group. Given a subgroup $H$ of $G$, we denote by $H^{0}$ the set of all hyperbolic elements of infinite order in $H$. Recall that two elements $f$ and $g$ in $G^{0}$ are said to be \emph{commensurable} in $G$ if $f^{k}$ is conjugate to $g^{l}$ in $G$, for some nonzero integers $k$ and $l$. 

\begin{definition}\rm
Let $G$ be a relatively hyperbolic group. A subgroup $H$ of $G$ is called \emph{suitable}, if there exist two non-commensurable elements $f_{1}$ and $f_{2}$ in $H^{0}$ such that $E(f_{1}) \cap E(f_{2}) = \{1\}$.
\end{definition}

\begin{proposition}\label{prop:AMOpr} \cite[Proposition 3.4]{AMO2007} Suppose that a group $G$ is hyperbolic relative to a collection of subgroups $\{H_{\lambda};\lambda \in \Lambda\}$ and that $H$ is a subgroup of $G$. Then $H$ is suitable if and only if $H^{0} \neq \emptyset$ and $E(H)=\bigcap_{g \in H^{0}}E(g) = \{1\}$.
\end{proposition}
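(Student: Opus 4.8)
The plan is to prove both directions of the equivalence by unwinding the two characterizations of suitability.

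First I would establish the forward direction: if $H$ is suitable, then $H^0 \neq \emptyset$ and $E(H) = \{1\}$. By definition, suitability already hands us two non-commensurable elements $f_1, f_2 \in H^0$ with $E(f_1) \cap E(f_2) = \{1\}$, so in particular $H^0 \neq \emptyset$ is immediate. For the second claim, observe that $E(H) = \bigcap_{g \in H^0} E(g)$ is contained in the intersection over just the two distinguished elements, i.e.\ $E(H) \subseteq E(f_1) \cap E(f_2) = \{1\}$. Hence $E(H)$ is trivial and this direction requires essentially no work beyond reading off the definitions.

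The substantive direction is the converse: assuming $H^0 \neq \emptyset$ and $E(H) = \{1\}$, I must produce two non-commensurable elements $f_1, f_2 \in H^0$ whose maximal elementary subgroups intersect trivially. Since $E(H) = \bigcap_{g \in H^0} E(g) = \{1\}$ and $H^0$ is nonempty, I would fix some $f \in H^0$; then $E(f)$ is not contained in the intersection being trivial unless $E(f)$ itself meets the other $E(g)$ trivially, so there must exist some $g \in H^0$ with $E(f) \cap E(g)$ a proper subgroup of $E(f)$. The key structural fact I would invoke is that each $E(g)$ for $g \in H^0$ is a maximal elementary subgroup, virtually cyclic by Theorem~\ref{thm:Osin}(2), and two such subgroups $E(f), E(g)$ coincide precisely when $f$ and $g$ are commensurable; more generally, non-commensurability of $f$ and $g$ forces $E(f) \cap E(g)$ to be finite (indeed, in the torsion-free toral case, trivial), since $E(f)$ is almost malnormal.

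The main obstacle will be passing from the global condition $\bigcap_{g \in H^0} E(g) = \{1\}$ to the existence of a single pair whose pairwise intersection is already trivial. The cleanest route is to argue that if every pair $E(f), E(g)$ with $f,g \in H^0$ had nontrivial intersection, then all these virtually cyclic subgroups would share a common nontrivial element or a common maximal subgroup, contradicting triviality of the full intersection. Concretely, I would first choose $f_1 \in H^0$, then seek $f_2 \in H^0$ non-commensurable with $f_1$: such $f_2$ exists because if every element of $H^0$ were commensurable with $f_1$, then every $E(g)$ would equal $E(f_1)$, forcing $E(H) = E(f_1) \neq \{1\}$, a contradiction. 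Having secured non-commensurable $f_1, f_2$, their maximal elementary subgroups satisfy $E(f_1) \cap E(f_2)$ finite by almost malnormality, hence trivial in the torsion-free setting, which is exactly the suitability condition. The one subtlety to handle carefully is that commensurability of $f_1$ with $g$ implies $E(f_1) = E(g)$, which follows from the uniqueness of the maximal elementary subgroup in Theorem~\ref{thm:Osin}(2) together with the fact that commensurable elements generate the same virtually cyclic structure.
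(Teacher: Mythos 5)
Your forward direction is correct and is the easy half. The converse, however, contains two genuine gaps. The first is your claim that ``commensurability of $f_1$ with $g$ implies $E(f_1)=E(g)$,'' on which your existence argument for a non-commensurable pair rests. If $tf_1^kt^{-1}=g^l$, then $E(g)=E(g^l)=tE(f_1^k)t^{-1}=tE(f_1)t^{-1}$: commensurable elements have \emph{conjugate}, not equal, maximal elementary subgroups. Already in a free group $F(a,b)$ the elements $a$ and $bab^{-1}$ are commensurable while $E(a)=\langle a\rangle\neq b\langle a\rangle b^{-1}=E(bab^{-1})$. So ``all of $H^0$ commensurable with $f_1$'' does not force $E(H)=E(f_1)$, and your contradiction evaporates. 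The conclusion you want is true, but it needs a different argument: one first observes that the hypotheses force $H$ to be non-elementary (an elementary $H$ with $f\in H^0$ satisfies $H\subseteq E(f)$, whence $E(g)=E(f)$ for every $g\in H^0$ and $E(H)=E(f)\ni f\neq 1$), and then invokes the nontrivial fact, proved by ping-pong in the relatively hyperbolic setting, that a non-elementary subgroup with $H^0\neq\emptyset$ contains pairwise non-commensurable elements of $H^0$; this is not a formal consequence of the definitions.

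The second gap is the decisive one. Non-commensurability of $f_1,f_2$ yields only that $E(f_1)\cap E(f_2)$ is \emph{finite} (if it were infinite it would contain an infinite-order element, which inside the virtually cyclic groups $E(f_1)$ and $E(f_2)$ would be commensurable with both $f_i$; note that almost malnormality of $E(f_1)$ does not apply directly, since $E(f_2)$ need not be a conjugate of $E(f_1)$). The proposition is stated for an arbitrary relatively hyperbolic group $G$, possibly with torsion, so your retreat to ``hence trivial in the torsion-free setting'' proves at best a special case, not the statement at hand. Moreover, your suggested route around this --- that pairwise nontrivial intersections would force all the $E(g)$ to share a common nontrivial element, contradicting $\bigcap_{g\in H^0}E(g)=\{1\}$ --- is invalid: a family of subgroups with varying finite parts can have pairwise nontrivial intersections while the global intersection is trivial. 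Bridging exactly this gap is the technical heart of the cited result: the argument of \cite{AMO2007} uses Lemma~\ref{lemma:AMOl}, identifying $E(H)$ as the maximal finite subgroup of $G$ normalized by $H$, together with a genuine construction producing non-commensurable elements $g\in H^0$ with $E(g)=\langle g\rangle\times E(H)$; once $E(H)=\{1\}$, two such elements satisfy $E(f_1)\cap E(f_2)=\langle f_1\rangle\cap\langle f_2\rangle=\{1\}$ because an infinite-order common element would again make $f_1$ and $f_2$ commensurable. Without some such construction, the passage from the global condition $E(H)=\{1\}$ to a single pair with trivial (rather than merely finite) intersection does not follow.
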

\begin{lemma} \label{lemma:AMOl} \cite[Lemma 3.3]{AMO2007} With the notation of Proposition~\ref{prop:AMOpr}, if $H$ is a non-elementary subgroup of $G$ and $H_0 \neq\emptyset$, then $E(H)$ is the maximal
finite subgroup of $G$ normalized by $H$.
\end{lemma}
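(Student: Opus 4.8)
The plan is to establish three properties of $E(H)=\bigcap_{g\in H^0}E(g)$: that it is normalized by $H$, that it is finite, and that it contains every finite subgroup of $G$ normalized by $H$. Together these say exactly that $E(H)$ is the maximal finite subgroup of $G$ normalized by $H$. The normalization is the quickest part. Straight from the definition $E(g)=\{f: f^{-1}g^nf=g^{\pm n}\text{ for some }n>0\}$ one checks that $hE(g)h^{-1}=E(hgh^{-1})$ for every $h\in G$. For $h\in H$, conjugation by $h$ permutes $H^0$, since it preserves $H$, preserves infinite order, and preserves the hyperbolic/parabolic dichotomy; hence
\[
hE(H)h^{-1}=\bigcap_{g\in H^0}hE(g)h^{-1}=\bigcap_{g\in H^0}E(hgh^{-1})=\bigcap_{g'\in H^0}E(g')=E(H),
\]
so $E(H)$ is normalized by $H$.

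For finiteness I would use both hypotheses. Since $H$ is non-elementary with $H^0\neq\emptyset$, I would invoke the standard structural fact (from the theory of relatively hyperbolic groups, as in Osin) that $H$ then contains two \emph{non-commensurable} elements $f_1,f_2\in H^0$. Because $f_1,f_2\in H^0$, we have $E(H)\subseteq E(f_1)\cap E(f_2)$, so it suffices to show this intersection is finite. If it contained an element $z$ of infinite order, then, each $E(f_i)$ being virtually cyclic by Theorem~\ref{thm:Osin}(2), one may compare appropriate powers of $z$ and $f_i$ inside a normal infinite cyclic subgroup of finite index of $E(f_i)$ to conclude that $z$ is commensurable with $f_i$ for $i=1,2$. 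Since commensurability is transitive on $G^0$, this would force $f_1$ and $f_2$ to be commensurable, contrary to their choice. Hence $E(f_1)\cap E(f_2)$, and with it $E(H)$, is finite.

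It remains to prove maximality. Let $K$ be any finite subgroup of $G$ normalized by $H$; I claim $K\subseteq E(g)$ for every $g\in H^0$, which yields $K\subseteq\bigcap_{g\in H^0}E(g)=E(H)$. Fix $g\in H^0\subseteq H$. Then $gKg^{-1}=K$, so conjugation by $g$ is a permutation of the finite set $K$ and some positive power $g^m$ centralizes $K$. For each $k\in K$ we then have $k^{-1}g^mk=g^m$, which is precisely the condition defining membership in $E(g)$ (with $n=m$ and the $+$ sign); thus $k\in E(g)$ and $K\subseteq E(g)$. Combining the three steps, $E(H)$ is finite, normalized by $H$, and contains every finite subgroup normalized by $H$, so it is the maximal such subgroup.

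I expect the finiteness step to be the main obstacle, and within it the one genuinely imported ingredient: that a non-elementary subgroup meeting $G^0$ must contain two non-commensurable hyperbolic elements of infinite order. The normalization and maximality arguments are formal manipulations of the definition of $E(g)$, whereas guaranteeing non-commensurable elements is exactly where the non-elementary hypothesis enters and draws on the heavier theory developed by Osin.
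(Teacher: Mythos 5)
Your proof is correct, and the paper itself gives no argument here---it cites the lemma from [AMO2007], whose proof proceeds exactly along your lines: $E(H)$ is normalized by $H$ since conjugation permutes $H^0$ and sends $E(g)$ to $E(hgh^{-1})$; finiteness follows from the existence of two non-commensurable elements of $H^0$ (AMO's Lemma 3.2, the one external ingredient you correctly flag) together with the virtually cyclic structure of the $E(f_i)$; and maximality follows because a finite subgroup $K$ normalized by $H$ is centralized by a positive power $g^m$ of each $g\in H^0$, forcing $K\subseteq E(g)$. So your proposal matches the standard proof in both decomposition and all key steps, with no gaps.
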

As an immediate consequence of Proposition~\ref{prop:AMOpr} and Lemma~\ref{lemma:AMOl} we have the following statement.
\begin{corollary}\label{cor:suitable}
Let a group $G$ be torsion-free hyperbolic relative to a collection of subgroups $\mathcal{H} = \{H_{\nu}, \nu \in \Lambda\}$. Suppose that $H$ is a non-elementary subgroup of $G$ containing a hyperbolic element $h$. Then $H$ is suitable. 
\end{corollary}

The following theorem is a combination of Theorem 2.4 and Lemma 5.1 from~\cite{Osin2010}.

\begin{theorem}\label{lemma:residual} (Osin)
Let $G$ be a group hyperbolic relative to a collection of subgroups $\{H_{\nu}, \nu \in \Lambda\}$, $X$ be a relative generating set for $G$ with respect to $\{H_{\nu}, \nu \in \Lambda\}$ and $\mathscr{H} = \bigsqcup_{\nu \in \Lambda}(H_{\nu} \setminus \{1\})$.
Let $H$ be a suitable subgroup of $G$, $t_{1}, \cdots, t_{m}$ arbitrary elements of $G$ and $N$ an arbitrary positive integer. Then there exists an epimorphism $\eta : G \rightarrow \bar{G}$ such that :
\begin{itemize}
\item[(1)] The group $\bar{G}$ is hyperbolic relative to $\{\eta(H_{\nu}), \nu \in \Lambda\}$.
\item[(2)] For any $i = 1, \cdots , m$, we have $\eta(t_{i}) \in \eta(H)$.
\item[(3)] The restriction of $\eta$ to $\mathscr{H}$ is injective.
\item[(4)] The restriction of $\eta$ to the subset of elements of length at most $N$ with respect to $X\cup \mathscr{H}$ is injective.
\item[(5)] If all hyperbolic elements of $G$ have infinite order then all hyperbolic elements of $\bar{G}$ have infinite order.
\end{itemize}
\end{theorem}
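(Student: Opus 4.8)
The plan is not to reprove Osin's small cancellation machinery but to extract the relevant conclusions from the two cited results and check that a single epimorphism can be arranged to carry all five properties at once. The underlying construction is the relatively hyperbolic analogue of Ol'shanskii's theory from \cite{Olshanskii93}: one produces $\bar G$ as a quotient $G/N$, where $N$ is the normal closure of a carefully chosen finite family of relators satisfying a small cancellation condition relative to the presentation of $G$.

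First I would invoke \cite[Theorem 2.4]{Osin2010} to obtain conclusions (1)--(4). The hypothesis that $H$ is suitable supplies two non-commensurable elements $f_1,f_2\in H^0$ with $E(f_1)\cap E(f_2)=\{1\}$, and these furnish the building blocks of the relators: for each $t_i$ one adjoins a relator of the form $t_i w_i^{-1}$, where $w_i$ is a long word in conjugates of $f_1$ and $f_2$ lying in $H$. In the quotient this forces $\eta(t_i)=\eta(w_i)\in\eta(H)$, which is (2). The small cancellation condition then yields the geometric control: $\bar G$ is again hyperbolic relative to the images $\eta(H_\nu)$, giving (1); and by taking the relators long enough, with the pieces meeting $\mathscr{H}$ controlled, the map $\eta$ is injective on $\mathscr{H}$ and on the ball of relative radius $N$, giving (3) and (4).

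Second I would invoke \cite[Lemma 5.1]{Osin2010} to obtain (5). This is the torsion-control statement: a hyperbolic element of finite order in $\bar G$ could only arise from the adjoined relators, and under the hypothesis that every hyperbolic element of $G$ has infinite order the construction introduces no such new torsion, so all hyperbolic elements of $\bar G$ retain infinite order.

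The step I expect to be the real content --- and the reason two results are cited rather than one --- is coherence: one must check that the single epimorphism produced for (1)--(4) simultaneously satisfies the hypotheses of the torsion-control lemma, and that the parameters (the length threshold $N$ and the choice of relators) can be fixed uniformly so that no conclusion is purchased at the cost of another. Since Osin's framework uses one and the same family of relators throughout, these demands are not in tension; the task reduces to verifying that the given data $(G,H,t_1,\dots,t_m,N)$ meet the hypotheses of both cited statements and that their outputs are precisely (1)--(5).
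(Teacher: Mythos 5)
Your proposal matches the paper exactly: the paper gives no proof of this theorem, stating it verbatim as ``a combination of Theorem 2.4 and Lemma 5.1 from \cite{Osin2010}'', with (1)--(4) coming from Theorem 2.4 and the torsion statement (5) from Lemma 5.1, just as you attribute them. Your sketch of the underlying small cancellation construction and the coherence check is a reasonable gloss but is not something the paper itself carries out.
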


Recall that we denote by $\mathcal{R}$ the class of finitely generated toral relatively hyperbolic groups (see Definition~\ref{def:toral}). Note that every free abelian group of finite rank is hyperbolic relative to itself. Hence, the following lemma holds.

\begin{lemma}\label{lemma:free-abelian-res}
Every free abelian group of finite rank is fully residually-$\mathcal{R}$ in the sense of both Definition \ref{def:resCclassical} and \ref{def:resCworking}.
\end{lemma}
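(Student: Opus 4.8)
The plan is to reduce the statement to the essentially trivial observation that a free abelian group of finite rank already belongs to the class $\mathcal{R}$, after which the fully residual property is witnessed by the identity map. First I would let $A$ be a free abelian group of rank $n$, so that $A \cong \mathbb{Z}^{n}$; in particular $A$ is finitely generated and torsion-free.

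Next I would verify the claim in the sentence preceding the lemma, namely that $A$ is hyperbolic relative to the single-element collection $\mathcal{H} = \{A\}$. Taking the relative generating set $X = \emptyset$, one has $F_{A} = A * F(\emptyset) = A$, and the natural homomorphism $\eta\colon F_{A} \to A$ is the identity, so its kernel $N$ is trivial. Choosing $\mathscr{R} = \emptyset$ then yields a finite relative presentation $\langle \emptyset, A \mid \emptyset\rangle$. Since $N$ is trivial, every word $W$ over $X \cup \mathscr{H} = A \setminus \{1\}$ that represents the identity in $A$ already represents the identity in $F_{A} = A$, whence $Area^{rel}(W) = 0$; the required linear relative isoperimetric inequality $Area^{rel}(W) \le \alpha\lVert W\rVert_{X \cup \mathscr{H}}$ then holds for any constant $\alpha > 0$. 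Thus $A$ is hyperbolic relative to $\{A\}$, and because $A$ is torsion-free while $\{A\}$ consists of abelian subgroups, $A$ is a toral relatively hyperbolic group; being finitely generated, $A \in \mathcal{R}$.

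Finally, since $A \in \mathcal{R}$, the identity homomorphism $\mathrm{id}_{A}\colon A \to A$ settles both definitions at once. Given any finite set $S = \{g_{1}, \dots, g_{k}\}$ of distinct elements, I would take $H_{S} = A \in \mathcal{R}$ and $\phi_{S} = \mathrm{id}_{A}$: this map is onto, so it meets the requirement of Definition~\ref{def:resCclassical}, and it is in particular a homomorphism, so it meets the weaker requirement of Definition~\ref{def:resCworking}; injectivity of $\mathrm{id}_{A}$ guarantees that $\phi_{S}(g_{1}), \dots, \phi_{S}(g_{k})$ remain distinct. Hence $A$ is fully residually-$\mathcal{R}$ in both senses.

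I do not anticipate a genuine obstacle here. The only point deserving care is confirming that Osin's definition permits a group to be hyperbolic relative to itself (the improper peripheral structure), which is precisely the content of the remark immediately preceding the lemma; with that in hand, the conclusion is immediate, since any group is trivially fully residually-$\mathcal{C}$ with respect to any class $\mathcal{C}$ that contains it.
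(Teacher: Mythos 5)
Your proposal is correct and matches the paper's argument exactly: the paper's entire justification is the remark preceding the lemma that every free abelian group of finite rank is hyperbolic relative to itself, whence it lies in $\mathcal{R}$ and the identity map witnesses both definitions. Your careful verification of the improper peripheral structure (with $X=\emptyset$, $\mathscr{R}=\emptyset$, trivial kernel, and relative area zero) simply spells out what the paper leaves implicit.
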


We now are able to prove the main result of this section.

\begin{theorem}\label{thm:res-subgp}
Let $G \in \mathcal{R}$. Every nontrivial finitely generated subgroup $H$ of $G$ is fully residually-$\mathcal{R}$ in the sense of Definition \ref{def:resCclassical}.
\end{theorem}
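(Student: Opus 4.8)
The plan is to split on whether $H$ is abelian. If $H$ is abelian, then since $G$ is torsion-free, $H$ is a finitely generated torsion-free abelian group, hence free abelian of finite rank, and Lemma~\ref{lemma:free-abelian-res} already gives the conclusion in the sense of Definition~\ref{def:resCclassical}. So I would reduce at once to the case that $H$ is non-abelian, and the whole argument then consists of producing, for a given finite set $S\subseteq H$, a single epimorphism of $H$ onto a group in $\mathcal{R}$ that is injective on $S$, using Osin's Theorem~\ref{lemma:residual}.

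First I would record that a non-abelian $H$ is both non-elementary and non-parabolic: a torsion-free elementary group is cyclic, hence abelian, and a parabolic subgroup is contained in one of the abelian peripheral subgroups $H_{\lambda}$, hence abelian. Next---and this is the step I expect to be the main obstacle---I would show that such an $H$ contains a hyperbolic element. Suppose not; then every nontrivial element of $H$ is parabolic of infinite order. Fixing $1\neq x\in H$ in its unique maximal parabolic $P$, Corollary~\ref{cor:parabolic-element} gives $C_{G}(x)=P$, and for any $h\in H$ the conjugate $hxh^{-1}$ lies in $hPh^{-1}$; the goal is to force $hPh^{-1}=P$ for all $h$, i.e. $H\subseteq N_{G}(P)=P$ by Corollary~\ref{cor:normalizer}, contradicting non-parabolicity. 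The difficulty is precisely that two infinite-order parabolic elements lying in distinct maximal parabolics must generate a subgroup containing a hyperbolic element---a ping-pong/malnormality phenomenon that does not follow formally from the listed corollaries. I would establish it from almost malnormality of maximal parabolics (Corollary~\ref{cor:parabolic-malnormal}) together with the triviality of intersections of distinct maximal parabolics (Theorem~\ref{thm:Osin}(1)), or by invoking the classification of subgroups of relatively hyperbolic groups. Once $H$ is known to contain a hyperbolic element, Corollary~\ref{cor:suitable} makes $H$ a suitable subgroup of $G$.

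With $H$ suitable I would apply Theorem~\ref{lemma:residual} with the following data: take $\{t_{1},\dots,t_{m}\}$ to be a finite generating set of $G$, and take $N$ to exceed the relative length of every element of the given set $S=\{g_{1},\dots,g_{n}\}$. The resulting epimorphism $\eta\colon G\rightarrow\bar{G}$ then satisfies $\eta(g_{i})\neq\eta(g_{j})$ for $i\neq j$ by injectivity on short elements (property~(4)), while property~(2) gives $\eta(t_{i})\in\eta(H)$ for every generator $t_{i}$ of $G$, so that $\bar{G}=\eta(G)\subseteq\eta(H)\subseteq\bar{G}$ and hence $\eta|_{H}\colon H\rightarrow\bar{G}$ is an \emph{onto} homomorphism separating $S$. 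It remains to check $\bar{G}\in\mathcal{R}$: it is finitely generated as a quotient of $G$; it is hyperbolic relative to the abelian subgroups $\eta(H_{\nu})$ by property~(1); and it is torsion-free because property~(3) forces $\eta|_{H_{\nu}}$ to be injective (so each peripheral stays torsion-free abelian), while property~(5) rules out finite-order hyperbolic elements, whence every nontrivial finite-order element would have to be parabolic and therefore lie in a torsion-free peripheral---impossible. Thus $\bar{G}$ is a finitely generated toral relatively hyperbolic group, $\eta|_{H}$ is the required map, and Definition~\ref{def:resCclassical} is verified.
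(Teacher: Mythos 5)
Your argument is correct and is essentially the paper's own proof: the same case split (the abelian/parabolic case handled by Lemma~\ref{lemma:free-abelian-res}, torsion-free elementary groups being cyclic and hence in $\mathcal{R}$, and the remaining case made suitable via Corollary~\ref{cor:suitable}), followed by the same application of Theorem~\ref{lemma:residual} with $t_1,\dots,t_m$ a generating set of $G$ so that $\eta(H)=\bar{G}$, with property (4) separating the finite set and properties (1), (3), (5) giving $\bar{G}\in\mathcal{R}$, where your spelled-out torsion-freeness check (peripherals stay torsion-free abelian by (3), and (5) excludes finite-order hyperbolics) is exactly what the paper leaves compressed. The one step you flag as the main obstacle---that a non-elementary subgroup not contained in a parabolic must contain a hyperbolic element---is equally implicit in the paper, whose proof simply treats the trichotomy ``elementary / contained in a parabolic / contains a hyperbolic element'' as exhaustive without justification, so your explicit appeal to the classification of subgroups of relatively hyperbolic groups (a convergence-group fact not derivable from the listed corollaries alone) addresses a gap common to both arguments rather than marking a different route.
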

\begin{proof}  
A torsion-free elementary group is necessarily cyclic and so belongs to $\mathcal{R}$. So, let $H$ be non-elementary. If $H$ is contained in a parabolic subgroup of $G$, then $H$ is a free abelian group of finite rank, hence it is fully residually-$\mathcal{R}$ in the sense of Definition~\ref{def:resCclassical} by Lemma \ref{lemma:free-abelian-res}. If $H$ contains a hyperbolic element then $H$ is suitable by Corollary~\ref{cor:suitable}. Since $G$ is finitely generated, Theorem~\ref{lemma:residual} (2) implies that there exists a homomorphism $\eta : G \rightarrow \bar{G}$ such that $\eta(H)=\bar{G}$. Indeed, one can choose the elements $t_1,\dots,t_m$ to be a generating set for $G$. Furthermore,  Theorem~\ref{lemma:residual} (4) yields that $\eta$ is injective on any finite set of elements in $G$. By Theorem~\ref{lemma:residual} (1), (3) and (5), $\bar{G}\in \mathcal{R}$. Hence, by considering the homomorphism $\eta$ restricted to $H$, $H$ is fully residually-$\mathcal{R}$ in the sense of Definition~\ref{def:resCclassical}.
\end{proof}

\begin{corollary}
A finitely generated group $G$ is fully residually-$\mathcal{R}$ in the sense of Definition \ref{def:resCclassical} if and only if it is fully residually-$\mathcal{R}$ in the sense of Definition \ref{def:resCworking}.
\end{corollary}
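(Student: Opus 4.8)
The plan is to prove the corollary as a direct consequence of Theorem~\ref{thm:res-subgp}. Since the two definitions differ only in whether the witnessing homomorphisms are required to be onto, one direction is trivial: a homomorphism that happens to be onto is in particular a homomorphism, so if $G$ is fully residually-$\mathcal{R}$ in the sense of Definition~\ref{def:resCclassical}, then the very same epimorphisms witness that $G$ is fully residually-$\mathcal{R}$ in the sense of Definition~\ref{def:resCworking}. No further argument is needed for this implication.

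The substance lies in the converse. Suppose $G$ is a finitely generated group that is fully residually-$\mathcal{R}$ in the sense of the Working Definition~\ref{def:resCworking}. I would fix a finite set $S = \{g_1,\dots,g_n\}$ of distinct elements of $G$ and seek an \emph{epimorphism} separating their images. By the Working Definition there is a group $\Gamma \in \mathcal{R}$ and a (not necessarily surjective) homomorphism $\phi_S \colon G \to \Gamma$ such that the images $\phi_S(g_1),\dots,\phi_S(g_n)$ are all distinct in $\Gamma$. The key observation is to replace $\Gamma$ by the image $H := \phi_S(G)$. Since $G$ is finitely generated, $H$ is a finitely generated subgroup of $\Gamma \in \mathcal{R}$, and it is nontrivial provided $n \geq 2$ (the separated images cannot all coincide), while the case of a single trivial element is vacuous. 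Now $\phi_S$, regarded as a map $G \to H$, is by construction surjective and still separates $g_1,\dots,g_n$.

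The final step is to upgrade the codomain $H$, which need not itself lie in $\mathcal{R}$, to an honest member of $\mathcal{R}$ while preserving surjectivity and the separation of the finitely many points. Here I would invoke Theorem~\ref{thm:res-subgp}: the finitely generated subgroup $H \leq \Gamma$ is fully residually-$\mathcal{R}$ in the sense of the Classical Definition. Applying that to the finite set of distinct elements $\{\phi_S(g_1),\dots,\phi_S(g_n)\} \subseteq H$ yields a group $\bar{H} \in \mathcal{R}$ and an epimorphism $\psi \colon H \to \bar{H}$ keeping those images distinct. Composing, the map $\psi \circ \phi_S \colon G \to \bar{H}$ is an epimorphism (a composite of surjections) onto a group in $\mathcal{R}$, and it sends $g_1,\dots,g_n$ to distinct elements. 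Since $S$ was an arbitrary finite subset, $G$ is fully residually-$\mathcal{R}$ in the sense of Definition~\ref{def:resCclassical}, completing the proof.

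I do not anticipate a genuine obstacle, as the corollary is engineered to follow from Theorem~\ref{thm:res-subgp}; the only points requiring care are the trivial bookkeeping that $H = \phi_S(G)$ is nontrivial and finitely generated (so that Theorem~\ref{thm:res-subgp} applies), and the observation that a composition of two surjections is again a surjection, which legitimizes the passage from the Working to the Classical definition.
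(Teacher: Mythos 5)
Your proposal is correct and follows essentially the same route as the paper: both pass to the image $H=\phi_S(G)$, a finitely generated subgroup of a group in $\mathcal{R}$, obtain from Theorem~\ref{thm:res-subgp} an epimorphism of $H$ onto a group in $\mathcal{R}$ separating the finitely many images, and compose the two surjections. The only cosmetic differences are that you invoke the statement of Theorem~\ref{thm:res-subgp} as a black box and phrase separation via distinct elements, whereas the paper re-runs that theorem's proof (case analysis on $H$, explicit choice of the generating set and of $N$) and works with nontrivial elements surviving; your bookkeeping that $H$ is nontrivial and finitely generated is adequate.
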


\begin{proof}
Obviously, Definition \ref{def:resCclassical} implies Definition \ref{def:resCworking}. To prove the converse, let $L$ be a finitely generated group, and let $a_1,\dots,a_n$ be nontrivial elements of $L$. If $L$ is fully residually-$\mathcal{R}$ in the sense of Definition \ref{def:resCworking} then there is a group $G\in\mathcal{R}$ and a homomorphism $\phi: L\rightarrow G$ such that $\phi(a_i)\neq 1$, for all $i=1,\dots,n$. We argue that there is an epimorphism $\alpha$ from $L$ onto a group in $\mathcal{R}$ such that the elements $a_1,\dots,a_n$ survive in the image, as follows. Let $H=\phi(L)\subseteq G$. If $H$ is either free abelian, or elementary, or $H=G$ then $H\in\mathcal{R}$ and we are done. Otherwise, $H$ is a proper subgroup of $G$ and is suitable. Choose a finite generating set $t_1,\dots,t_m$ for $G$ and an integer $N$ such that  $N>\max\{|\phi(a_i)|_G; i=1,\dots,n \}$, and let $\eta$ be a homomorphism from  Theorem~\ref{thm:res-subgp} such that $\eta(H)=\bar{G}\in\mathcal{R}$. Let $\mu=\eta|_H$ be the restriction of $\eta$ to $H$. Then $\alpha=\mu\circ\phi : L\rightarrow \bar{G}$ is an epimorphism which has all of the required properties, by the proof of Theorem~\ref{thm:res-subgp}. Hence, $L$ is fully residually-$\mathcal{R}$ in the sense of Definition~ \ref{def:resCclassical}.
\end{proof}

%%%%%%%%%%%%%%%%

\section{The generalization of Baumslag's theorem}\label{sec:BaumslagThm}
Relatively hyperbolic groups are not commutative transitive in general. For instance, if $G$ is a non-abelian relatively hyperbolic group with torsion, then it may have a nontrivial finite centre (cf. Corollary \ref{cor:abelian-normal-subsubgp}). In this case, $G$ is not commutative transitive. However, toral relatively hyperbolic groups are commutative transitive as the following theorem shows.

Recall that we denote by $\mathcal{R}$ the class of finitely generated toral relatively hyperbolic groups (see Definition~\ref{def:toral}).

\begin{theorem}\label{thm:CT-R}
If $G$ is a toral relatively hyperbolic group, then $G$ is commutative transitive. In particular, if $G \in \mathcal{R}$ then $G$ is commutative transitive.
\end{theorem}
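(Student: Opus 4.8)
The plan is to reduce the statement to the observation that in a toral relatively hyperbolic group the centralizer of every nontrivial element is abelian, and then commutative transitivity is immediate. Take nontrivial elements $g,h,f\in G$ with $[g,h]=1$ and $[g,f]=1$; the goal is to show $[h,f]=1$. Since $G$ is torsion-free, $g$ has infinite order, and by Definition~\ref{def:parabolic} it is either hyperbolic or parabolic. In either case both $h$ and $f$ lie in the centralizer $C_G(g)$, so it suffices to prove that $C_G(g)$ is abelian.

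First I would treat the case where $g$ is hyperbolic. By Lemma~\ref{lemma:elementary-hyp}, the centralizer $C_G(g)=\langle a\rangle$ is infinite cyclic, in particular abelian. Hence $h,f\in\langle a\rangle$ and therefore $[h,f]=1$. Next I would treat the case where $g$ is parabolic. Let $P$ be the maximal parabolic subgroup containing $g$; since $G$ is toral, $P$ is a conjugate of one of the abelian subgroups $H_\lambda$ and is therefore itself abelian. Because $g$ is a parabolic element of infinite order with $[g,h]=1$, Corollary~\ref{cor:parabolic-element} gives $h\in P$, and likewise $[g,f]=1$ gives $f\in P$. As $P$ is abelian, $[h,f]=1$.

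Combining the two cases establishes that $G$ is commutative transitive, and the final sentence of the theorem follows because the groups in $\mathcal{R}$ are by Definition~\ref{def:toral} exactly the finitely generated toral relatively hyperbolic groups. There is no real obstacle here beyond organizing the dichotomy; the work has already been done in establishing that hyperbolic elements have cyclic centralizers (Lemma~\ref{lemma:elementary-hyp}) and that commuting with a parabolic element of infinite order forces membership in its maximal parabolic subgroup (Corollary~\ref{cor:parabolic-element}). The only point requiring care is the use of torsion-freeness to guarantee that the nontrivial $g$ has infinite order, which is what makes both of these structural results applicable.
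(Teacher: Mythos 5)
Your proof is correct and takes essentially the same route as the paper's: the same dichotomy on whether the common commuting element is parabolic or hyperbolic, settled by Corollary~\ref{cor:parabolic-element} in the first case and Lemma~\ref{lemma:elementary-hyp} in the second. The only cosmetic difference is that you invoke the cyclicity of $C_G(g)$ directly from the ``in particular'' clause of Lemma~\ref{lemma:elementary-hyp}, whereas the paper first re-derives $E(x)=E(y)=E(z)$ via Theorem~\ref{thm:Osin}(2) before applying that same lemma.
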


\begin{proof}
Suppose that there are nontrivial elements $x, y$ and $z \in G$ with $[x,y] = 1$ and $[y,z] = 1$. It follows from Corollary \ref{cor:parabolic-element} that if one of $x,y,z$ is parabolic then so are the other two. Moreover, in this case all the three elements belong to one and the same maximal parabolic subgroup. Since every parabolic subgroup is abelian, we have $[x,z] = 1$.
So, suppose that $x,y$ and $z$ are hyperbolic in $G$. Since $yxy^{-1}=x$, $y \in E(x)$ and so, $E(x) \subseteq E(y)$ by Theorem \ref{thm:Osin} (2). Similarly, we have $E(y) \subseteq E(x)$. So, $E(x) = E(y)$. By a similar argument, we have $E(y) = E(z)$. Therefore, $x, y$ and $z \in E(x)$. But Lemma \ref{lemma:elementary-hyp} implies that $E(x)$ is the maximal cyclic subgroup containing $x$; and hence, $[x, z] = 1$. Thus, $G$ is commutative transitive.
\end{proof}

Throughout the rest of this section, we use Definition \ref{def:resCworking} of (fully) residually-$\mathcal{R}$ groups. We have shown in Section~\ref{sec:equivalent-def} that if $L$ is a finitely generated group then $L$ is (fully) residually-$\mathcal{R}$ in the sense of Definition~\ref{def:resCclassical} if and only if $L$ is (fully) residually-$\mathcal{R}$ in the sense of Definition~\ref{def:resCworking}.

\begin{proposition}\label{prop-1}
Suppose that $G$ is a fully residually-$\mathcal{R}$ group. Then $G$ is commutative transitive.
\end{proposition}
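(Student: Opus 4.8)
The plan is to prove the contrapositive-free statement directly: assume $G$ is fully residually-$\mathcal{R}$, take nontrivial $x,y,z\in G$ with $[x,y]=1$ and $[y,z]=1$, and show $[x,z]=1$. The natural strategy is to push the whole configuration into a single group from $\mathcal{R}$, where commutative transitivity is already available by Theorem \ref{thm:CT-R}, and then pull the conclusion back. The one subtlety is that a homomorphism into a commutative transitive group does not immediately detect whether $[x,z]=1$ in $G$; it only tells us about the images. So the argument must be arranged so that the relevant commutators are exactly the elements we ask the residualizing homomorphism to preserve.

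First I would suppose, for contradiction, that $[x,z]\neq 1$. Then I would apply the fully residually-$\mathcal{R}$ property (in the sense of Definition \ref{def:resCworking}) to the finite set consisting of the single nontrivial element $[x,z]$, together with $x$, $y$, and $z$ if desired. This yields a group $H\in\mathcal{R}$ and a homomorphism $\phi\colon G\rightarrow H$ with $\phi([x,z])\neq 1$, and in particular $\phi(x),\phi(y),\phi(z)$ are all nontrivial since each of them having trivial image would force $\phi([x,z])=1$. The key point is that $\phi$ is a homomorphism, so $[\phi(x),\phi(y)]=\phi([x,y])=1$ and likewise $[\phi(y),\phi(z)]=\phi([y,z])=1$.

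Now $\phi(x),\phi(y),\phi(z)$ are three nontrivial elements of the toral relatively hyperbolic group $H\in\mathcal{R}$ satisfying $[\phi(x),\phi(y)]=1$ and $[\phi(y),\phi(z)]=1$. By Theorem \ref{thm:CT-R}, $H$ is commutative transitive, so $[\phi(x),\phi(z)]=1$, that is, $\phi([x,z])=1$. This contradicts our choice of $\phi$, and hence $[x,z]=1$, proving that $G$ is commutative transitive.

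I do not expect a genuine obstacle here; the proposition is essentially a formal consequence of Theorem \ref{thm:CT-R} combined with the fact that commutators are preserved by homomorphisms. The only place requiring care is to make sure the residualizing homomorphism is chosen so as to keep $[x,z]$ nontrivial in the image (rather than merely keeping $x,y,z$ distinct), which is exactly why one feeds the element $[x,z]$ into Definition \ref{def:resCworking}. It is also worth noting explicitly that this direction of the argument uses only that $G$ is residually-$\mathcal{R}$ in a pointwise sense applied to a single element, so the full strength of the ``fully residually'' hypothesis is not needed for commutative transitivity alone.
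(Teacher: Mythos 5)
Your main argument is the paper's argument: apply a residualizing homomorphism $\phi\colon G\to H\in\mathcal{R}$ preserving the relevant elements and invoke Theorem~\ref{thm:CT-R} in $H$. However, there is a genuine error in how you justify the hypotheses of commutative transitivity in $H$. You claim that $\phi(x),\phi(y),\phi(z)$ are automatically nontrivial ``since each of them having trivial image would force $\phi([x,z])=1$''. That is correct for $x$ and $z$, because $\phi([x,z])=[\phi(x),\phi(z)]$, but it is false for $y$: the element $y$ does not occur in the word $[x,z]$, and nothing prevents $\phi(y)=1$ while $\phi([x,z])\neq 1$. If $\phi(y)=1$, then commutative transitivity of $H$ --- which is a statement about three \emph{nontrivial} elements --- cannot be applied to $\phi(x),\phi(y),\phi(z)$, and no contradiction arises. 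The repair is to put $y$ itself into the finite set fed to Definition~\ref{def:resCworking}, e.g.\ take $S$ to consist of the distinct elements among $\{1,\,y,\,[x,z]\}$, which gives $\phi(y)\neq 1$ and $\phi([x,z])\neq 1$ simultaneously; nontriviality of $\phi(x)$ and $\phi(z)$ then follows for free. This is exactly what the paper's proof does: it requests a single homomorphism under which $\phi(x),\phi(y),\phi(z)$ and $\phi([x,z])$ are all nontrivial at once.

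Consequently, your closing remark --- that this direction uses only that $G$ is residually-$\mathcal{R}$ ``applied to a single element'' --- is not merely unjustified but false, and in fact contradicts the content of Baumslag-type theorems. The paper's Example~\ref{ex:F2xZ} is the counterexample: $F_{2}\times\mathbb{Z}=\langle a,b\rangle\times\langle t\rangle$ is residually-$\mathcal{R}$ but not commutative transitive. Taking $x=a$, $y=t$, $z=b$, one has $[x,y]=[y,z]=1$ but $[x,z]=[a,b]\neq 1$; the projection onto the free factor $F_{2}$ keeps $[a,b]$ nontrivial while killing $y=t$, which exhibits precisely the failure mode identified above. What the proposition genuinely needs is the \emph{fully} residual property applied to a set containing both $y$ and $[x,z]$; with that correction your proof coincides with the paper's.
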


\begin{proof}
Let $x, y$ and $z$ be nontrivial elements in $G$; and let $[x,y] = 1$ and $[y,z] = 1$. We need to show that $[x,z] = 1$. Suppose that $[x,z] \neq 1$. Since $G$ is fully residually $\mathcal{R}$, there is a homomorphism $\phi : G \rightarrow H \in \mathcal{R}$ such that $\phi(x), \phi(y), \phi(z)$ and $\phi([x,z])$ are nontrivial elements in $H$. But $\phi([x,y])$ and $\phi([y,z])$ are trivial in $H$. Since $H$ is commutative transitive by Theorem \ref{thm:CT-R}, we have $\phi([x,z]) = 1$. The contradiction shows that $[x,z] = 1$. 
\end{proof}

\begin{lemma}\label{lemma:res-torsionfree}
If $G$ is a residually-$\mathcal{R}$ group, then $G$ is torsion-free.
\end{lemma}

\begin{proof}
Assume that there exists a nontrivial torsion element $x \in G$. Since $G$ is residually-$\mathcal{R}$, there is a homomorphism $\phi : G \rightarrow H \in \mathcal{R}$ such that $\phi(x) \neq 1$. Then $\phi(x)$ has infinite order in $H$, a contradiction. Therefore, $G$ is torsion-free.
\end{proof}

\begin{corollary}\label{abelian-fullyres}
Suppose that a finitely generated abelian group $G$ is residually-$\mathcal{R}$. Then $G$ is fully residually-$\mathcal{R}$.
\end{corollary}

\begin{proof}
By Lemma \ref{lemma:res-torsionfree}, $G$ has no torsion elements hence, it is isomorphic to a free abelian group. Then by Lemma~\ref{lemma:free-abelian-res}, $G$ is fully residually-$\mathcal{R}$.
\end{proof}

\begin{lemma}\label{lemma:abelian-normal-centre}
Every abelian normal subgroup of a residually-$\mathcal{R}$ group is contained in its centre.
\end{lemma}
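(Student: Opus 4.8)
The plan is to fix an abelian normal subgroup $A \trianglelefteq G$ together with arbitrary elements $a \in A$ and $g \in G$, and to prove $[a,g]=1$ by contradiction. Assume $[a,g]\neq 1$. Since $G$ is residually-$\mathcal{R}$, I would choose a homomorphism $\phi\colon G\to H$ with $H\in\mathcal{R}$ and $\phi([a,g])=[\phi(a),\phi(g)]\neq 1$; in particular $\phi(a)\neq 1$ and $\phi(g)\neq 1$. The key observation is that a single residual homomorphism suffices here: the nontriviality of the one element $[a,g]$ is all that is needed, and it automatically survives together with $\phi(a)$ and $\phi(g)$, so the weaker residual hypothesis (rather than full residuality) is enough.

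Next I would transport the algebraic structure across $\phi$. Because $A$ is abelian, $\phi(A)\le H$ is abelian; because $A\trianglelefteq G$, conjugation by $\phi(g)$ preserves $\phi(A)$, so $\phi(g)\in N_H(\phi(A))$; and $\phi(a)\in\phi(A)$ has infinite order, since $H$ is torsion-free (Definition~\ref{def:toral}) and $\phi(a)\neq 1$. I would then split into the two cases furnished by Lemma~\ref{lemma:abelian-subgp}, according to whether the infinite-order element $\phi(a)$ is parabolic or hyperbolic in $H$.

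If $\phi(a)$ is parabolic, Lemma~\ref{lemma:abelian-subgp} places all of $\phi(A)$ inside a maximal parabolic subgroup $P$ of $H$. Then $\phi(g)\phi(a)\phi(g)^{-1}\in\phi(A)\subseteq P$ is a nontrivial element of $P\cap\phi(g)P\phi(g)^{-1}$, so malnormality of $P$ (Corollary~\ref{cor:parabolic-malnormal}) forces $\phi(g)\in P$; since $H$ is toral, $P$ is abelian, whence $[\phi(a),\phi(g)]=1$, a contradiction. If $\phi(a)$ is hyperbolic, Lemma~\ref{lemma:abelian-subgp} gives $\phi(A)\subseteq E(\phi(a))$, and $\phi(A)$ is infinite. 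The corollary stating that the normalizer of an infinite subgroup of $E(\phi(a))$ is contained in $N_H(E(\phi(a)))$, combined with $N_H(E(\phi(a)))=E(\phi(a))$ (Corollary~\ref{cor:normalizer}), yields $\phi(g)\in E(\phi(a))$. By Lemma~\ref{lemma:elementary-hyp} the subgroup $E(\phi(a))$ is cyclic, hence abelian, so again $[\phi(a),\phi(g)]=1$, a contradiction. In either case $[a,g]=1$, and since $a\in A$ and $g\in G$ were arbitrary, $A$ lies in the centre of $G$.

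The case analysis itself is routine; the one point deserving care is the bookkeeping guaranteeing that $\phi(g)$ genuinely normalizes $\phi(A)$ and that $\phi(a)$ retains infinite order, after which the malnormality and normalizer results of Section~\ref{sec:RelHypGps} close both cases. I do not anticipate a serious obstacle, as the entire difficulty has been front-loaded into the structural statements about parabolic and elementary subgroups of toral relatively hyperbolic groups.
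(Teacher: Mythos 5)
Your proof is correct, but it takes a different route from the paper's. The paper observes that $[a,g]\in A$ (by normality), picks $\phi$ with $\phi([a,g])\neq 1$, and then works at the level of subgroups: $\phi(G)$ is non-abelian, hence non-elementary and non-parabolic in $H$ (parabolic subgroups of a toral group are abelian, and torsion-free elementary groups are cyclic), so its abelian normal subgroup $\phi(A)$ must be trivial by Corollary~\ref{cor:abelian-normal-subsubgp} --- contradicting $1\neq\phi([a,g])\in\phi(A)$. You instead work elementwise with $\phi(a)$ and $\phi(g)$, in effect inlining the content of Corollary~\ref{cor:abelian-normal-subsubgp}: you split on whether the infinite-order element $\phi(a)$ is parabolic or hyperbolic via Lemma~\ref{lemma:abelian-subgp}, then use malnormality of maximal parabolics (Corollary~\ref{cor:parabolic-malnormal}) in the first case and the normalizer statements ($N_H(\phi(A))\subseteq N_H(E(\phi(a)))=E(\phi(a))$, plus Lemma~\ref{lemma:elementary-hyp}) in the second, concluding in both cases that $\phi(g)$ commutes with $\phi(a)$. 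All steps check out: $\phi(gag^{-1})\in\phi(A)$ does use normality of $A$, $\phi(A)$ is infinite because $\phi(a)$ has infinite order in the torsion-free group $H$, and the hypotheses of the cited corollaries are verified. What the paper's route buys is brevity --- it reuses a corollary already proved by exactly your kind of case analysis. What your route buys is a slightly sharper local statement: you never need $\phi(A)$ to be normal in all of $\phi(G)$, only that the single element $\phi(g)$ normalizes it, and you conclude commutation directly rather than passing through triviality of $\phi(A)$; you also correctly note that mere residuality (one surviving element, namely $[a,g]$) suffices, which matches the paper's hypothesis.
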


\begin{proof}
Let $G$ be a residually-$\mathcal{R}$ group; let $A$ be an abelian normal subgroup of $G$; and let $Z(G)$ be the centre of $G$. Suppose that $A$ is not a subset of $Z(G)$. Then there exists a nontrivial element $a \in A \setminus Z(G)$ with $[a,g] \neq 1$ for some $g \in G$. Note that $[a,g] \in A$ as $A$ is normal in $G$. Since $G$ is residually-$\mathcal{R}$, there exists a homomorphism $\phi : G \rightarrow H \in \mathcal{R}$ such that $\phi([a,g]) \neq 1$; necessarily, $\phi(a)\neq 1$ and $\phi(g)\neq 1$. Then $\phi(G)$ is a non-abelian subgroup of $H$; in particular, $\phi(G)$ is a non-elementary non-parabolic subgroup of $H$. Note that $\phi(A)$ is abelian and normal in $\phi(G)$ as $A$ is abelian and normal in $G$. So, $\phi(A)$ is trivial by Corollary \ref{cor:abelian-normal-subsubgp}. It is impossible as $1 \neq \phi([a,g]) \in \phi(A)$. Therefore, we have $A \subseteq Z(G)$.
\end{proof}

\begin{example}\label{ex:F2xZ}
The direct product $F_{2} \times \mathbb{Z}$ of a free group of rank 2 and an infinite cyclic group is residually-$\mathcal{R}$. Note that $F_{2} \times \mathbb{Z}$ is not commutative transitive since it is non-abelian with a nontrivial centre. By Proposition \ref{prop-1}, $F_{2} \times \mathbb{Z}$ is not fully residually-$\mathcal{R}$.
\end{example}

Having proved that groups in $\mathcal{R}$ are commutative transitive, we can apply an argument due to B.~Baumslag~\cite{Baumslag65} to prove the following.

\begin{proposition}\label{prop-2}
Suppose that a finitely generated group $G$ is residually-$\mathcal{R}$ and commutative transitive. Then $G$ is fully residually-$\mathcal{R}$.
\end{proposition}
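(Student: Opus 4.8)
The plan is to mimic Benjamin Baumslag's original argument for free groups, now available to us because Theorem~\ref{thm:CT-R} guarantees that every group in $\mathcal{R}$ is commutative transitive and Lemma~\ref{lemma:res-torsionfree} guarantees that a residually-$\mathcal{R}$ group is torsion-free. We are given a finitely generated group $G$ that is residually-$\mathcal{R}$ and commutative transitive, and a finite set $g_1,\dots,g_n$ of distinct elements; equivalently, after forming the differences $g_ig_j^{-1}$, it suffices to produce a single homomorphism $\phi\colon G\to H\in\mathcal{R}$ that is nontrivial on a prescribed finite set of nontrivial elements $w_1,\dots,w_k$. The residual property hands us, for each single $w_r$, a homomorphism killing nothing of $w_r$; the whole difficulty is to \emph{combine} these into one homomorphism that simultaneously detects all of $w_1,\dots,w_k$.

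The key step will be the standard commutative-transitivity trick for ``merging'' two residual homomorphisms into one. First I would reduce to detecting just two nontrivial elements $u$ and $v$ at once, since an induction on $k$ then yields the general case. Given $u,v$, the dichotomy to exploit is whether $[u,v]=1$ or $[u,v]\neq 1$. If $[u,v]\neq 1$, I would apply the residual property to the single element $[u,v]$, obtaining $\phi\colon G\to H\in\mathcal{R}$ with $\phi([u,v])=[\phi(u),\phi(v)]\neq 1$; this forces both $\phi(u)\neq 1$ and $\phi(v)\neq 1$, so $\phi$ detects $u$ and $v$ at once. The real work is the commuting case $[u,v]=1$. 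Here I would use commutative transitivity of $G$: since $u$ and $v$ commute, and $G$ is torsion-free, I expect to find a common ``root'' or at least to argue that $u$ and $v$ lie in a common abelian (in fact, by commutative transitivity, a maximal abelian) subgroup $A=C_G(u)$, which is abelian precisely because any two elements centralizing $u$ must centralize each other. Then detecting $u$ and $v$ becomes a problem inside the abelian group $A$, and I would combine a homomorphism detecting $u$ with one detecting $v$ by passing to a product target, or by choosing a single integer combination $uv^{-m}$ that is nontrivial and lands in $A$, reducing again to the single-element residual property.

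Concretely, for the commuting case I would consider the element $w=u^{a}v^{b}$ for suitable integers, or more cleanly use that in a torsion-free commutative-transitive group the centralizer $C_G(u)$ is abelian, so $\langle u,v\rangle$ is a finitely generated torsion-free abelian group, hence free abelian of finite rank; within such a group two distinct nontrivial elements are separated by a homomorphism onto $\mathbb{Z}$, and by Lemma~\ref{lemma:free-abelian-res} and the residual property of $G$ one can arrange a single homomorphism of $G$ into a group of $\mathcal{R}$ that is injective on the finite subset $\{1,u,v,uv^{-1},\dots\}$ of $\langle u,v\rangle$. The decisive use of commutative transitivity is that it makes centralizers abelian and thereby confines the troublesome commuting elements to a free abelian subgroup, where separation is easy.

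The main obstacle I anticipate is the bookkeeping in the inductive merging step: when detecting several elements, some pairs commute and others do not, so one must handle a mixed configuration rather than a clean dichotomy. The classical resolution, which I would adopt, is to detect the elements one pair at a time and then merge homomorphisms $\phi_1,\phi_2$ detecting overlapping data into a single homomorphism, using the commutative-transitivity hypothesis on $G$ to guarantee that the combined map can be realized into a single group of $\mathcal{R}$ rather than merely into a product (which need not lie in $\mathcal{R}$). Verifying that the merged target can be taken inside $\mathcal{R}$ — rather than in a direct product that leaves the class — is the subtle point, and I expect it to rest on the interplay between commutative transitivity and the structure of centralizers established in Lemma~\ref{lemma:elementary-hyp} and Corollary~\ref{cor:parabolic-element}.
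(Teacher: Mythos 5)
Your non-commuting case is exactly right and matches the paper: if $[u,v]\neq 1$, one application of the residual property to $[u,v]$ detects both $u$ and $v$, since $\phi([u,v])\neq 1$ forces $\phi(u)\neq 1$ and $\phi(v)\neq 1$. But your treatment of the commuting case has a genuine gap, and it is precisely the step where Baumslag's actual trick is needed. You propose to confine commuting $u,v$ to the free abelian group $\langle u,v\rangle$ and then ``arrange a single homomorphism of $G$ into a group of $\mathcal{R}$ injective on a finite subset of $\langle u,v\rangle$'' via Lemma~\ref{lemma:free-abelian-res}. This does not work for two reasons. First, Lemma~\ref{lemma:free-abelian-res} produces homomorphisms of the abelian subgroup, not of $G$, and homomorphisms do not extend from subgroups; the only tool you have on $G$ is the residual property, which certifies one element per homomorphism --- asking it to be injective on a finite set is exactly the fully residual conclusion you are trying to prove, so the argument is circular. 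Second, your fallback of replacing the pair $(u,v)$ by a single combination $w=u^av^b\in\langle u,v\rangle$ fails: a homomorphism may kill $u$ but not $v$ (the kernel of $\phi$ restricted to $\mathbb{Z}^2$ can be any subgroup), and then $\phi(w)=\phi(v)^b$ can be nontrivial even though $\phi(u)=1$, so $\phi(w)\neq 1$ does not certify both elements.

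The missing idea --- which is the heart of the paper's proof --- is to escape the commuting case by \emph{conjugation} rather than by working inside the centralizer. In the inductive step with certificate $g$ for $g_1,\dots,g_k$, one considers $c(x)=[g,\,xg_{k+1}x^{-1}]$ for $x\in G$. If $c(x)=1$ for \emph{all} $x$, then by commutative transitivity the normal closure $A=\langle xg_{k+1}x^{-1}:x\in G\rangle$ is abelian and normal, hence contained in $Z(G)$ by Lemma~\ref{lemma:abelian-normal-centre}; after the preliminary reduction to $Z(G)=1$ (if $Z(G)\neq 1$, commutative transitivity makes $G$ abelian and Corollary~\ref{abelian-fullyres} finishes), this forces $g_{k+1}=1$, a contradiction. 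So some conjugate of $g_{k+1}$ fails to commute with $g$, and $g'=c(x)\neq 1$ is the new single certificate, reducing everything to your non-commuting case. Note also that this scheme needs only one certificate element for the whole list, so there is no ``merging of two homomorphisms'' at all --- the difficulty you flagged about products leaving the class $\mathcal{R}$ never arises, because the residual property is invoked exactly once, at the very end, on the single element $g$.
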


\begin{proof} (B. Baumslag)
Assume that the centre of $G$ is trivial; otherwise, $G$ would be abelian because it is commutative transitive and hence, $G$ is fully residually-$\mathcal{R}$ by Corollary \ref{abelian-fullyres}.

Let $n$ be a positive integer and let $g_{1}, \dots g_{n}$ be nontrivial elements of $G$. We claim that there exists a nontrivial $g \in G$ such that if $\phi(g) \neq 1$ for a homomorphism $\phi : G \rightarrow H \in \mathcal{R}$ then none of the elements $\phi(g_{1}), \cdots \phi(g_{n})$ is trivial. We can prove this by induction on $n$. For $n = 1$, it is true by taking $g = g_{1}$. Now, assume that there  exists a nontrivial $g \in G$ such that the claim is true for nontrivial elements $g_{1}, \dots, g_{k}$ where $k < n$. We need to show that there exists a nontrivial $g' \in G$ such that the assertion holds for $g_{1}, \dots , g_{k+1}$. Consider $c(x) = [g, xg_{k+1}x^{-1}]$ for $x \in G$. If $c(x) = 1$ for all $x$, then $A = \langle xg_{k+1}x^{-1}; \text{ for every } x \in G \rangle$ is abelian because $G$ is commutative transitive; in addition, $A$ is normal in $G$. Since the centre $Z(G)$ of $G$ is trivial, and by Lemma \ref{lemma:abelian-normal-centre} $A\subset Z(G)$, $A$ is trivial. But note that $g_{k+1} \in A$, which is impossible as $g_{k+1} \neq 1$. Hence, there exists $x$ in $G$ such that $c(x) \neq 1$. Choose $g' = c(x)$. Note that $\phi(g') \neq 1$ implies that $\phi(g) \neq 1$ and $\phi(g_{k+1}) \neq 1$. So, none of the elements $\phi(g_{1}), \cdots , \phi(g_{k+1})$ is trivial; and hence, the claim is proven.

Let $g$ be constructed as in the claim. Since $G$ is residually-$\mathcal{R}$, we have a homomorphism $\phi : G \rightarrow H \in \mathcal{R}$ such that $\phi(g) \neq 1$. We can conclude that $\phi(g_{i}) \neq 1$ for all $i = 1, \cdots , n$; in particular, $G$ is fully residually-$\mathcal{R}$.
\end{proof}

As a consequence of Propositions \ref{prop-1} and \ref{prop-2}, we have the following generalization of B. Baumslag's theorem~\cite{Baumslag65}.
\begin{theorem}
A finitely generated group is fully residually-$\mathcal{R}$ if and only if it is residually-$\mathcal{R}$ and commutative transitive. 
\end{theorem}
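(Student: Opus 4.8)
The plan is to deduce the theorem directly from the two propositions already in hand, so that the only work is to check that both implications of the biconditional are covered. First I would split the statement into its two directions and handle them separately.

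For the forward direction, suppose $G$ is a finitely generated fully residually-$\mathcal{R}$ group. I would first observe that being fully residually-$\mathcal{R}$ trivially entails being residually-$\mathcal{R}$: applying the ``fully'' condition of Definition~\ref{def:resCworking} to a singleton set $S=\{g\}$ is exactly the residual condition for $g$. Commutative transitivity is then immediate from Proposition~\ref{prop-1}. This settles one direction with no further argument.

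For the converse, suppose $G$ is a finitely generated group that is both residually-$\mathcal{R}$ and commutative transitive. Here I would simply invoke Proposition~\ref{prop-2}, which asserts precisely that such a group is fully residually-$\mathcal{R}$. Combining the two directions yields the biconditional.

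The main obstacle lies not in this packaging step, which is routine, but in the two propositions on which it rests. The substance of the forward direction is Proposition~\ref{prop-1}, whose proof turns on the commutative transitivity of the groups in $\mathcal{R}$ themselves (Theorem~\ref{thm:CT-R}), and the substance of the converse is Proposition~\ref{prop-2}, which adapts B.~Baumslag's inductive construction of a single ``test'' element whose survival under a homomorphism forces all of $g_1,\dots,g_n$ to survive at once. That inductive step, together with its reliance on the triviality of suitable abelian normal subgroups, is where the genuine difficulty resides; the theorem itself is then a one-line corollary of the two propositions.
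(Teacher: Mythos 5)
Your proposal matches the paper's proof exactly: the paper states this theorem as an immediate consequence of Propositions~\ref{prop-1} and \ref{prop-2}, decomposed into the same two directions you give. One small correction: applying the ``fully'' condition of Definition~\ref{def:resCworking} to a singleton $S=\{g\}$ yields no information (distinctness of images is vacuous for one element); to see that fully residually-$\mathcal{R}$ implies residually-$\mathcal{R}$ you should instead take $S=\{1,g\}$, so that $\phi_S(g)\neq\phi_S(1)=1$.
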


%%%%%%%%%%%%%%%%%%%%%%%%%

\section{Toral-limit groups}\label{sec:Toral-LimitGps}

In this section, we study the class of fully residually-$\mathcal{R}$ groups. We recall some definitions introduced by Miasnikov and Remeslennikov in the paper~\cite{MyasnikovRemeslennikov96}. Let $G$ be a group. The centralizer of an element $u \in G$ in $G$ is denoted by $C_{G}(u)$. 

\begin{definition} \label{def:ext-centralizers} \rm Let $G$ be a group. We say that a group $G_1$\textit{ is obtained from} $G$ \textit{by an extension of a centralizer} if, for some $u\in G$, $G_{1}$ is isomorphic to the free product of $G$ and $C_{G}(u) \times \langle t \rangle$ with amalgamation: $$G_1 \cong G *_{C_{G}(u)}(C_{G}(u) \times \langle t \rangle).$$ Also, $G_{1}$ has a presentation as an HNN-extension, as follows:  $$G_1=\langle G, t\mid [C_G(u),t] = 1\rangle.$$ A group obtained as the union of a chain of extensions of centralizers $$G = G_{0} \leq G_{1} \leq \cdots \leq G_{i} \leq G_{i+1} \leq \cdots \leq G_{k}$$ is called an \emph{iterated extension of centralizers up to} $k$, where $$G_{i+1} = \langle G_{i}, t_{i}\mid [C_{G_{i}}(u_{i}),t_{i}] = 1\rangle, \ i=0,1,\dots,k-1.$$
\end{definition}

The following result is proved by Osin in \cite{Osin06}.

\begin{lemma}\label{lemma:Osin-amalgated}
\cite[Corollary 1.5]{Osin06} Suppose that $A$ and $B$ are groups, hyperbolic relative to $\{A_{\mu}\}_{\mu \in M} \cup \{K\}$ and $\{B_{\nu}\}_{\nu \in N}$, respectively. Assume in addition that $K$ is finitely generated and for some $\eta \in N$, there is a monomorphism $\xi : K \rightarrow B_{\eta}$. Then the amalgamated product $A *_{K=\xi(K)} B$ is hyperbolic relative to $\{A_{\mu}\}_{\mu \in M} \cup \{B_{\nu}\}_{\nu \in N}$.
\end{lemma}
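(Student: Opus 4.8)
The plan is to verify directly Osin's two defining conditions for $G := A *_{K=\xi(K)} B$ to be hyperbolic relative to $\mathcal{P} := \{A_{\mu}\}_{\mu \in M} \cup \{B_{\nu}\}_{\nu \in N}$: that $G$ is finitely presented relative to $\mathcal{P}$, and that it satisfies a linear relative isoperimetric inequality with respect to $\mathcal{P}$. Write $X = X_A \sqcup X_B$ for the relative generating set assembled from those of the factors, and $\mathscr{H} = \bigsqcup_{\mu}(A_{\mu} \setminus \{1\}) \sqcup \bigsqcup_{\nu}(B_{\nu} \setminus \{1\})$.

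First I would build a relative presentation of $G$ over $\mathcal{P}$. Since $A$ and $B$ are relatively hyperbolic in Osin's sense, fix finite relative presentations $A = \langle X_A, \{A_{\mu}\}, K \mid \mathscr{R}_A \rangle$ and $B = \langle X_B, \{B_{\nu}\} \mid \mathscr{R}_B \rangle$ realizing linear relative isoperimetric inequalities. Only finitely many letters $k_1, \dots, k_r \in K$ occur among the relators in $\mathscr{R}_A$, and in $G$ each $k_j$ equals the single $\mathscr{H}$-letter $\xi(k_j) \in B_{\eta}$; replacing each occurrence turns $\mathscr{R}_A$ into a finite set $\widetilde{\mathscr{R}}_A$ of relators over $X \cup \mathscr{H}$. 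I would then check that $\langle X, \{A_{\mu}\}, \{B_{\nu}\} \mid \widetilde{\mathscr{R}}_A \cup \mathscr{R}_B \rangle$ presents $G$ relative to $\mathcal{P}$: the relators of $B$ recover $B$, while $\widetilde{\mathscr{R}}_A$ recovers $A$ with $K$ identified to $\xi(K) \le B_{\eta}$, which is exactly the amalgamation. Finite generation of $K$ is precisely what keeps this presentation finite.

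For the isoperimetric inequality, let $W$ be a word over $X \cup \mathscr{H}$ with $W =_G 1$, and consider a relative van Kampen diagram $\Delta$ for $W$ over the presentation above. Every generator belongs either to $A$ or to $B$, so $\Delta$ can be cut along arcs labelled by the amalgamated subgroup into subdiagrams, each of which is a relative van Kampen diagram over $A$ or over $B$; equivalently, one reads the trajectory of $W$ in the Bass--Serre tree $T$ of the splitting and uses its backtracks to organize the decomposition. On each factor subdiagram I would invoke the linear relative isoperimetric inequality of $A$ or of $B$. The crucial point is that the cutting arcs are labelled by elements of $K = \xi(K) \le B_{\eta}$, hence by single $\mathscr{H}$-letters, so they contribute nothing to relative length; summing the linear bounds over the pieces then yields $Area^{rel}(W) \le C\lVert W \rVert_{X \cup \mathscr{H}}$ for a uniform constant $C$.

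The main obstacle is making this decomposition quantitative: one must bound the number and total relative length of the cutting arcs, together with the cost of the reductions that shuffle parabolic syllables across the amalgamation, linearly in $\lVert W \rVert_{X \cup \mathscr{H}}$. This is where the hypotheses do the work, since $K$ is a \emph{full} peripheral subgroup of $A$ that embeds into the peripheral subgroup $B_{\eta}$ of $B$, the amalgamation takes place entirely in the parabolic directions and introduces no new hyperbolic defect, while finite generation of $K$ bounds each interface syllable. An alternative route that isolates the same difficulty differently is to form, from fine hyperbolic graphs carrying the relative hyperbolicity of $A$ and $B$, a tree of such graphs over $T$ in which the $K$-vertex of each $A$-block is identified with the $B_{\eta}$-vertex of the adjacent $B$-block, and then to verify that the resulting graph is connected, fine, and hyperbolic with finite edge stabilizers and with $\mathcal{P}$ representing its infinite vertex stabilizers; here the delicate points are fineness and hyperbolicity of the glued graph.
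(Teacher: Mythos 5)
You should first note what the paper actually does with this statement: it gives no proof at all. The lemma is imported verbatim as \cite[Corollary 1.5]{Osin06}, so the only meaningful benchmark is Osin's own argument in that paper, which indeed proceeds in the general shape you describe (build a finite relative presentation of $A *_{K=\xi(K)} B$ over $\{A_{\mu}\}_{\mu\in M}\cup\{B_{\nu}\}_{\nu\in N}$, then establish a linear relative isoperimetric inequality by surgery on relative van Kampen diagrams along the edge group). Your first step --- replacing the finitely many $K$-letters in $\mathscr{R}_A$ by their $\xi$-images in $B_{\eta}$ to get a finite relative presentation, with finite generation of $K$ guaranteeing finiteness --- is correct and is essentially how the presentation is set up.

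The problem is that your second step is a plan, not a proof, and the part you yourself flag as ``the main obstacle'' is not a technical footnote: it \emph{is} the theorem. Two concrete points. (i) When you cut $\Delta$ along arcs ``labelled by elements of $K$,'' the label of such an arc is a priori a subword over $X\cup\mathscr{H}$ that merely \emph{represents} an element $k\in K$; to treat it as the single $\mathscr{H}$-letter $\xi(k)\in B_{\eta}$, as your length accounting requires, you must splice in cells certifying the equality, and the total relative area of these splices, together with the number of cutting arcs, must be bounded linearly in $\lVert W\rVert_{X\cup\mathscr{H}}$. Nothing in your sketch supplies this bound, and it cannot be soft: the hypothesis that $K$ is a full peripheral subgroup of $A$ (hence almost malnormal in $A$, by Theorem~\ref{thm:Osin}(1)) is used quantitatively here. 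For instance, doubling $F_2=\langle a,b\rangle$ along $\langle a^2\rangle$ produces a group containing $\langle a,c\mid a^2=c^2\rangle\supseteq\mathbb{Z}^2$, which is not hyperbolic relative to the naive candidate collection --- consistent with the fact that $\langle a^2\rangle$ is not almost malnormal, so $F_2$ is not hyperbolic relative to it; any proof must therefore make the peripherality of $K$ do real work in the estimates, not just invoke it rhetorically as ``no new hyperbolic defect.'' (ii) Your alternative route through trees of fine hyperbolic graphs defers exactly the two hard verifications (fineness and hyperbolicity of the glued graph), which you name but do not address. As written, the proposal reduces the lemma to precisely the estimates established in \cite{Osin06} rather than proving them --- which, ironically, is the same logical status the statement has in the paper itself, where it is cited rather than proved.
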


\begin{corollary}\label{cor:amal-hyp-rel}
Let $G$ be a finitely generated group hyperbolic relative to a collection $\mathcal{H} = \{H_{\lambda}, \lambda \in \Lambda\}$ of subgroups; and let $\mathcal{P} = \mathcal{H} \setminus\{ H_{\lambda}\}$ for some $\lambda \in \Lambda$. Suppose that $C$ is a subgroup of $H_{\lambda}$, and let $K = \langle H_{\lambda}, t \mid [C,t] = 1\rangle$. Then $G *_{C} (C \times \langle t \rangle)$ is hyperbolic relative to $\mathcal{P}\cup \{K\}$.
\end{corollary}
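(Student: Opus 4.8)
The plan is to reduce the statement to a single application of Osin's amalgamation result, Lemma~\ref{lemma:Osin-amalgated}, after rewriting the target group $G *_{C} (C \times \langle t \rangle)$ as an amalgamated product over $H_{\lambda}$. The key observation is that, since $C \leq H_{\lambda} \leq G$, there is a natural isomorphism
\[
G *_{C} (C \times \langle t \rangle) \;\cong\; G *_{H_{\lambda}} K, \qquad K = \langle H_{\lambda}, t \mid [C,t] = 1\rangle = H_{\lambda} *_{C} (C \times \langle t \rangle).
\]
First I would verify this isomorphism. It is the standard ``associativity'' of amalgamated products: presenting $G$ by generators and relators and then adjoining $t$ together with the relations $[c,t]=1$ for $c$ ranging over a generating set of $C$ yields, on the one hand, $G *_{C} (C \times \langle t \rangle)$ directly, and on the other hand the pushout of $G \hookleftarrow H_{\lambda} \hookrightarrow K$, because inside $K$ the subgroup $H_{\lambda}$ is one of the two amalgamation factors and hence embeds. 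I would record this at the level of presentations, but it is routine bookkeeping.

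Next I would set up the hypotheses of Lemma~\ref{lemma:Osin-amalgated} with $A = G$, with $B = K$, and amalgamating over $H_{\lambda}$; note that $H_{\lambda}$ here plays the role of the subgroup denoted ``$K$'' in the statement of that lemma, so I must keep the two meanings of the symbol apart. For $A = G$ I use the given structure: $G$ is hyperbolic relative to $\mathcal{P} \cup \{H_{\lambda}\}$, so the collection $\{A_{\mu}\}_{\mu \in M}$ is $\mathcal{P}$ and the distinguished peripheral subgroup is $H_{\lambda}$. For $B = K$ I use the trivial relatively hyperbolic structure: every group is hyperbolic relative to itself, so $K$ is hyperbolic relative to the one-element collection $\{K\}$. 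The required monomorphism is then the inclusion $H_{\lambda} \hookrightarrow K$, whose image lies inside the single peripheral subgroup $K$ of $B$; this is exactly where the embedding of an amalgamation factor into $K$ is used.

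I also need the finite-generation hypothesis that Lemma~\ref{lemma:Osin-amalgated} imposes on the amalgamating subgroup $H_{\lambda}$. Since $G$ is finitely generated and hyperbolic relative to $\mathcal{H}$, Remark~\ref{remark:fg-hyp} guarantees that each peripheral subgroup, in particular $H_{\lambda}$, is finitely generated. With all hypotheses verified, Lemma~\ref{lemma:Osin-amalgated} yields that $A *_{H_{\lambda}} B = G *_{H_{\lambda}} K$ is hyperbolic relative to $\{A_{\mu}\}_{\mu \in M} \cup \{B_{\nu}\}_{\nu \in N} = \mathcal{P} \cup \{K\}$. Combining this with the isomorphism of the first step gives the claim.

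As for the main obstacle: there is no analytic difficulty, since all the relative-hyperbolicity content is packaged inside Osin's lemma. The only step requiring genuine care is the first one, namely identifying $G *_{C} (C \times \langle t \rangle)$ with $G *_{H_{\lambda}} K$ and confirming that $H_{\lambda}$ sits inside $K$ as a (whole-group) peripheral subgroup, all while keeping the clashing uses of the symbol ``$K$'' straight.
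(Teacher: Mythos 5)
Your proposal is correct and follows essentially the same route as the paper: both rewrite $G *_{C} (C \times \langle t \rangle)$ as $G *_{H_{\lambda}} K$, note that $K$ is hyperbolic relative to itself with $H_{\lambda}$ embedding into it, invoke Osin's amalgamation result (Lemma~\ref{lemma:Osin-amalgated}), and secure finite generation of $H_{\lambda}$ from that of $G$ via \cite[Theorem~1.1]{OsinBook}. Your write-up is in fact slightly more careful than the paper's, which dismisses the isomorphism $G_1 \cong G *_{C} (C \times \langle t \rangle)$ with a ``clearly.''
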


\begin{proof}
Since $G$ is finitely generated, each subgroup in $\mathcal{H}$ is finitely generated, by~\cite[Theorem~1.1]{OsinBook}. Note that $K$ is hyperbolic relative to itself and that $H_{\lambda}$ embeds into $K$. Hence, the amalgamated free product $G_1 = G *_{H_{\lambda}} K$ is hyperbolic relative to $\mathcal{P}\cup \{K\}$, by Lemma~\ref{lemma:Osin-amalgated}. Clearly, $G_1$ is isomorphic to the amalgamated product $G *_{C} (C \times \langle t \rangle)$. Therefore, $G *_{C} (C \times \langle t \rangle)$ is hyperbolic relative to $\mathcal{P}\cup \{K\}$.
\end{proof}

\begin{proposition}\label{extension-centralizers}  
Let $\mathcal{C}$ be the class of finitely generated torsion-free relatively hyperbolic groups, and let $G \in \mathcal{C}$. Suppose that $G_{k}$ is obtained from $G$ by an iterated extension of centralizers up to $k$. Then $G_{k} \in \mathcal{C}$, for any $k\geq 1$. In fact, if $G \in \mathcal{R}$ then $G_{k} \in \mathcal{R}$, for any $k\geq 1$.
\end{proposition}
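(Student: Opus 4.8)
The plan is to reduce to the single-extension case $k=1$ and then induct, since an iterated extension of centralizers up to $k$ is built from $k$ successive single extensions, each performed on a group that the previous step has already placed in $\mathcal{C}$ (resp. $\mathcal{R}$) and which is plainly still finitely generated. So it suffices to prove: if $G$ is finitely generated torsion-free relatively hyperbolic (resp. toral) and $G_{1}=G*_{C_{G}(u)}(C_{G}(u)\times\langle t\rangle)$ for a nontrivial $u\in G$, then $G_{1}\in\mathcal{C}$ (resp. $\mathcal{R}$). Throughout I take $u\neq 1$; since $G$ is torsion-free, $u$ then has infinite order and is either hyperbolic or parabolic in the sense of Definition~\ref{def:parabolic}.

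First I would analyze the centralizer $C=C_{G}(u)$ and arrange it to sit inside a peripheral subgroup, so that Corollary~\ref{cor:amal-hyp-rel} becomes applicable. If $u$ is hyperbolic, Lemma~\ref{lemma:elementary-hyp} gives $C=E(u)=\langle a\rangle\cong\mathbb{Z}$, and Theorem~\ref{thm:Osin}(3) lets me enlarge the peripheral collection to $\mathcal{H}\cup\{E(u)\}$, so that $C=E(u)$ is itself one of the peripheral subgroups. If $u$ is parabolic, it lies in a \emph{unique} maximal parabolic subgroup (two distinct maximal parabolics meet in a finite, hence trivial, subgroup by Theorem~\ref{thm:Osin}(1)); after conjugating $u$, which only replaces $G_{1}$ by an isomorphic group, I may assume this subgroup is some $H_{\lambda}\in\mathcal{H}$, and then $C_{G}(u)\subseteq H_{\lambda}$ by Corollary~\ref{cor:parabolic-element}.

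With $C$ contained in a peripheral subgroup $H_{\lambda}$, Corollary~\ref{cor:amal-hyp-rel} shows that $G_{1}=G*_{C}(C\times\langle t\rangle)$ is hyperbolic relative to $\mathcal{P}\cup\{K\}$, where $\mathcal{P}=\mathcal{H}\setminus\{H_{\lambda}\}$ and $K=\langle H_{\lambda},t\mid [C,t]=1\rangle$. This already yields $G_{1}\in\mathcal{C}$: it is finitely generated, relatively hyperbolic, and torsion-free because an amalgamated product of the torsion-free groups $G$ and $C\times\langle t\rangle$ over their common subgroup is torsion-free. For the toral refinement I must additionally check that the new peripheral subgroups are abelian. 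In the hyperbolic case $K=E(u)\times\langle t\rangle\cong\mathbb{Z}^{2}$; in the parabolic case $H_{\lambda}$ is abelian, so in fact $C_{G}(u)=H_{\lambda}$ and $K=H_{\lambda}\times\langle t\rangle$ is free abelian of finite rank. In both cases every group in $\mathcal{P}\cup\{K\}$ is abelian, so $G_{1}\in\mathcal{R}$.

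The main obstacle is precisely this centralizer analysis in the single-extension step: everything hinges on realizing $C_{G}(u)$ inside a peripheral subgroup so that Corollary~\ref{cor:amal-hyp-rel} can be invoked, which is what forces the split into the hyperbolic and parabolic cases and the use of Theorem~\ref{thm:Osin}(3) to absorb $E(u)$ into the peripheral structure. Once the base case is established the induction is routine; I should only be careful that each $G_{i}$ stays finitely generated and that the structural results (Lemma~\ref{lemma:elementary-hyp}, Corollary~\ref{cor:parabolic-element}) are applied to $G_{i}$, which the inductive hypothesis guarantees is again a finitely generated torsion-free relatively hyperbolic (resp. toral) group.
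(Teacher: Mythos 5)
Your proposal is correct and takes essentially the same route as the paper's proof: the same split into the hyperbolic and parabolic cases, the same use of Lemma~\ref{lemma:elementary-hyp} and Theorem~\ref{thm:Osin}(3) to place $C_G(u)=E(u)$ in the peripheral collection, the same appeal to Corollary~\ref{cor:parabolic-element} and Corollary~\ref{cor:amal-hyp-rel}, the same verification that $K$ is free abelian in the toral case, and the same induction on $k$. The only cosmetic difference is that the paper replaces $u$ by the conjugate $u'=g^{-1}ug\in H_\lambda$ instead of observing, as you do, that conjugating $u$ changes $G_1$ only up to isomorphism; these are equivalent.
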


\begin{proof}
Let $G \in \mathcal{C}$ be hyperbolic relative to a collection $\mathcal{H} = \{H_{\lambda}, \lambda \in \Lambda\}$ of subgroups, and let $u$ be a hyperbolic element in $G$. As $G$ is torsion-free, by Lemma~\ref{lemma:elementary-hyp}, $E(u) = \langle a \rangle$ for some $a\in G$, and so $C_{G}(u) = \langle a \rangle$. By Theorem~\ref{thm:Osin} (3), $G$ is hyperbolic relative to $\mathcal{H} \cup \{\langle a \rangle\}$. Corollary \ref{cor:amal-hyp-rel} implies that $$G_{1}=\left\langle G,t\mid [a,t]=1\right\rangle =G\ast_{\left\langle a\right\rangle} (\left\langle a\right\rangle \times \left\langle t\right\rangle )$$ is hyperbolic relative to $\mathcal{H} \cup (\langle a \rangle \times \langle t \rangle)$. As an amalgamated product of finitely generated torsion-free groups, $G_{1}$ is finitely generated and torsion-free. It follows that $G_{1} \in \mathcal{C}$. Note that if $G \in \mathcal{R}$ then the subgroups in $\mathcal{H}$ and $\langle a \rangle \times \langle t \rangle$ are all finitely generated free abelian and so $G_{1} \in \mathcal{R}$. 

Now, suppose that $u$ is a parabolic element in $G$, then $u' = g^{-1}ug \in H_{\lambda}$ for some $H_{\lambda} \in \mathcal{H}$ and $g \in G$. Let $\mathcal{P} = \mathcal{H} \setminus \{H_{\lambda}\}$. Denote $C=C_{G}(u')$; by Corollary~\ref{cor:parabolic-element}, $C \subseteq H_{\lambda}$. Set $K = \langle H_{\lambda}, t; [C,t] = 1\rangle$, then $K$ is finitely generated and torsion free. It follows that $G' = G\ast_{H_{\lambda}} K$ is finitely generated and torsion-free; and so is $G_{1} = G *_{C} (C \times \langle t \rangle)$ as $G_{1} \cong G'$. By Corollary~\ref{cor:amal-hyp-rel}, $G_{1}$ is hyperbolic relative to $\mathcal{P}\cup \{ K\}$.  Thus, $G_{1} \in \mathcal{C}$. Note that if $G \in \mathcal{R}$ then $C = H_{\lambda}$ and so $K$ is abelian; therefore, $G_{1} \in \mathcal{R}$. 

An easy induction on $k$ shows that $G_{k} \in \mathcal{C}$. If $G \in \mathcal{R}$ then either $C_{G_{i}}(u_{i})$ is a maximal cyclic hyperbolic subgroup, or $C_{G_{i}}(u_{i}')$ is a maximal abelian parabolic subgroup, for each $i$. By induction, we conclude that $G_{k} \in \mathcal{R}$, for all $k$.
\end{proof}

Observe that every nontrivial parabolic subgroup of $G \in \mathcal{R}$ is proper if and only if $G$ is non-abelian. We denote the class of non-abelian groups in $\mathcal{R}$ by $\mathcal{G}$. The statement of Theorem C from \cite{KharlampovichMyasnikov09} is equivalent to the following (see \cite{BaumsalgMyasnikovRemeslennikov99, MyasnikovRemeslennikov96}).

\begin{theorem}\label{thm:KM} \cite[Theorem C]{KharlampovichMyasnikov09}  
Suppose that $\Gamma \in \mathcal{G}$. Then a finitely generated group $G$ is fully residually-$\Gamma$ if and only if $G$ embeds into a group $\bar{\Gamma}$ obtained from $\Gamma$ by an iterated extension of centralizers up to a finite number $k$.
\end{theorem}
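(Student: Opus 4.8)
The plan is to read Theorem~\ref{thm:KM} as a restatement, in the language of Definition~\ref{def:ext-centralizers}, of Theorem~C of \cite{KharlampovichMyasnikov09}. Accordingly the work is not to reprove the deep embedding from scratch but to (i) settle the elementary implication directly and (ii) exhibit the dictionary, supplied by \cite{BaumsalgMyasnikovRemeslennikov99, MyasnikovRemeslennikov96}, that matches the two formulations. I will treat the two directions of the biconditional separately.

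For the self-contained direction --- if $G$ embeds into a group $\bar\Gamma$ obtained from $\Gamma$ by an iterated extension of centralizers up to $k$, then $G$ is fully residually-$\Gamma$ --- I would first show that a single extension $G_1 = G *_{C} (C \times \langle t\rangle)$ with $C = C_G(u)$ discriminates into $G$. The separating maps are the ``big powers'' retractions $\pi_N\colon G_1 \to G$ that fix $G$ and send $t \mapsto u^N$; these are well defined because $u^N$ centralizes $C_G(u)$, so $[C,t]=1$ is respected. Writing an element of $G_1$ in amalgamated normal form $g_0 t^{\varepsilon_1} g_1 \cdots t^{\varepsilon_r} g_r$ with the $g_j$ taken from fixed coset representatives, one checks that for any finite set of nontrivial elements there is an $N$ for which every such normal form has nontrivial image, the point being that for $N$ large the syllables $u^{\varepsilon_j N}$ do not cancel against the fixed $G$-parts. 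Iterating along the chain $G = G_0 \le \cdots \le G_k = \bar\Gamma$ and composing with a discriminating family for $\Gamma$ shows $\bar\Gamma$ is fully residually-$\Gamma$; since the property passes to subgroups by restriction of homomorphisms, $G$ is fully residually-$\Gamma$.

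The converse --- every finitely generated fully residually-$\Gamma$ group embeds into such a $\bar\Gamma$ --- is the substance of Theorem~C of \cite{KharlampovichMyasnikov09}, which I would take as a black box rather than reprove. Conceptually it proceeds through the structure theory of $\Gamma$-limit groups: one passes to the Makanin--Razborov diagram of $G$ over $\Gamma$ (finite because toral relatively hyperbolic groups are equationally Noetherian), analyses the canonical abelian/cyclic JSJ decomposition together with the limit action on an $\mathbb R$-tree, and runs the Rips--Sela shortening argument to obtain a descent along which the tower of centralizer extensions is assembled inductively. By Lemma~\ref{lemma:elementary-hyp} and Corollary~\ref{cor:parabolic-element} the centralizers extended at each stage are precisely the maximal cyclic subgroups of hyperbolic elements and the maximal abelian parabolic subgroups, which is exactly the form handled in Proposition~\ref{extension-centralizers}; restricting to $\Gamma \in \mathcal{G}$ non-abelian guarantees these parabolic subgroups are proper, so each step is genuine. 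To match the conclusion of \cite{KharlampovichMyasnikov09} with Definition~\ref{def:ext-centralizers} I would invoke the correspondence of \cite{BaumsalgMyasnikovRemeslennikov99, MyasnikovRemeslennikov96} identifying the coordinate (NTQ) groups produced there with iterated extensions of centralizers --- this is the parenthetical ``see'' in the statement.

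The main obstacle, and the part that is genuinely the present authors' to verify rather than merely cite, is this translation step in the converse: confirming that the groups into which \cite{KharlampovichMyasnikov09} embeds $G$ coincide with iterated extensions of centralizers in the precise sense of Definition~\ref{def:ext-centralizers}, and that every centralizer extended is of the cyclic-hyperbolic or abelian-parabolic type covered by Proposition~\ref{extension-centralizers}. The deep embedding itself --- the existence of the tower --- I would not carry out, as it is Theorem~C.
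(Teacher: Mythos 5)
The paper offers no proof of this statement at all: it is imported verbatim (modulo the terminological translation via \cite{BaumsalgMyasnikovRemeslennikov99, MyasnikovRemeslennikov96}) from Theorem C of \cite{KharlampovichMyasnikov09}, which is exactly how you treat the deep direction, so your proposal matches the paper's approach. Your supplementary big-powers sketch of the discrimination direction is the standard argument and is sound --- though note it tacitly uses that toral relatively hyperbolic groups satisfy the big powers condition, itself a nontrivial theorem; since that direction is in any case contained in the cited Theorem C, this creates no gap.
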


\begin{theorem}\label{thm:embedding-R}
Let $\Gamma \in \mathcal{R}$. If a finitely generated group $G$ is fully residually-$\Gamma$ then $G$ embeds into a group $\bar{\Gamma}\in\mathcal{R}$. 
\end{theorem}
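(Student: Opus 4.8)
The plan is to deduce the theorem by combining the embedding statement of Kharlampovich and Miasnikov (Theorem~\ref{thm:KM}) with the closure property established in Proposition~\ref{extension-centralizers}. Since Theorem~\ref{thm:KM} is phrased only for $\Gamma$ lying in the class $\mathcal{G}$ of \emph{non-abelian} groups in $\mathcal{R}$, I would first split into two cases according to whether $\Gamma$ is abelian or not. The non-abelian case carries the essential statement, and the abelian case is a short independent argument.

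The main case is when $\Gamma$ is non-abelian, so that $\Gamma\in\mathcal{G}$. Here the argument is essentially a direct assembly. Since $G$ is finitely generated and fully residually-$\Gamma$, Theorem~\ref{thm:KM} provides an embedding of $G$ into a group $\bar{\Gamma}$ obtained from $\Gamma$ by an iterated extension of centralizers up to some finite number $k$. Because $\Gamma\in\mathcal{R}$, Proposition~\ref{extension-centralizers} then gives $\bar{\Gamma}\in\mathcal{R}$. Thus $G$ embeds into $\bar{\Gamma}\in\mathcal{R}$, as required.

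It remains to treat the case where $\Gamma$ is abelian. Being a finitely generated torsion-free abelian group, $\Gamma$ is free abelian of finite rank. I would first argue that $G$ is itself abelian: if $[a,b]\neq 1$ for some $a,b\in G$, then the fully residually-$\Gamma$ property (Definition~\ref{def:resCworking}) yields a homomorphism $\phi\colon G\to\Gamma$ with $\phi(a)$, $\phi(b)$ and $\phi([a,b])$ all nontrivial, but $\phi([a,b])=[\phi(a),\phi(b)]=1$ since $\Gamma$ is abelian, a contradiction; this is exactly the argument used for the class $\mathcal{A}$ in the introduction. Moreover $G$ is torsion-free, since a nontrivial torsion element would have to survive under some homomorphism into the torsion-free group $\Gamma$. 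Hence $G$ is a finitely generated torsion-free abelian group, i.e. free abelian of finite rank, and so $G\in\mathcal{R}$ (every free abelian group of finite rank is hyperbolic relative to itself, as noted before Lemma~\ref{lemma:free-abelian-res}). Then $G$ embeds into itself, completing this case.

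I do not expect a serious obstacle, since the heavy machinery is entirely packaged in the two results being invoked. The only points requiring care are checking that the hypotheses of Theorem~\ref{thm:KM} genuinely hold, which is precisely why $\Gamma$ abelian must be separated out, and confirming in the abelian case that $G$ reduces to a free abelian group of finite rank so that it lands in $\mathcal{R}$ directly rather than through an extension of centralizers. The real mathematical content sits in Theorem~\ref{thm:KM} and Proposition~\ref{extension-centralizers}, which together reduce the statement to a routine case analysis.
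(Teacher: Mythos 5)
Your proposal is correct and follows essentially the same route as the paper: the non-abelian case is exactly the paper's assembly of Theorem~\ref{thm:KM} with Proposition~\ref{extension-centralizers}, and the abelian case matches the paper's observation that $G$ is then free abelian and hence in $\mathcal{R}$. The only difference is that you spell out the details the paper leaves implicit in the abelian case (that $G$ is abelian and torsion-free, hence free abelian of finite rank), which is a welcome elaboration rather than a deviation.
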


\begin{proof}
Note that $\mathcal{R} = \mathcal{G} \cup \{\text{all abelian groups in } \mathcal{R}\}$. If $\Gamma \in \mathcal{G}$, then the statement immediately follows from Proposition~\ref{extension-centralizers} and Theorem~\ref{thm:KM}. If $\Gamma\in\mathcal{R}$ is a finitely generated free abelian group then $G$ is free abelian and so $G \in \mathcal{R}$.
\end{proof}

%%%%%%%%%%%%%%%%%%%%%%%%%%%

\section{The Baumslag-Solitar groups}\label{sec:BS-gps}

The statement of Theorem \ref{thm:embedding-R} can be contrasted with the case when one considers torsion-free fully residually-$\mathcal{H}$ groups, where $\mathcal{H}$ is the class of hyperbolic groups, see Proposition~\ref{prop:BS-fully-res-H} below.

Recall that the presentation $BS(p,q)=\left<a,b; ba^{p}b^{-1} = a^{q}\right>$ with $p,q\in\mathbb{Z}$ defines a Baumslag-Solitar group. Whereas the integers $p$ and $q$ can be arbitrary, we always assume that $p$ and $q$ are nonzero.

The following lemma is stated as Corollary 4.22 in \cite{OsinBook}. We provide a proof for completeness.

\begin{lemma}\label{lemma:Osin-BS} \cite[Corollary 4.22]{OsinBook} 
Let $G$ be a finitely generated group hyperbolic relative to a collection of subgroups $\{H_{1}, \cdots , H_{m}\}$. Suppose that $$B \cong BS(p,q)=\left<a,b; ba^{p}b^{-1} = a^{q}\right>$$ with $pq\neq 0$ is a subgroup of $G$. Then $B$ is a parabolic subgroup of $G$.
\end{lemma}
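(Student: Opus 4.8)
The plan is to show that the subgroup $B \cong BS(p,q)$ cannot contain a hyperbolic element, which by Lemma~\ref{lemma:abelian-subgp} and the general structure theory already developed will force $B$ into a parabolic subgroup. The key structural fact about $BS(p,q)$ that I would exploit is the abelian normal subgroup generated by the conjugates of $a$: set $A = \langle b^{i}ab^{-i}; i\in\mathbb{Z}\rangle$. Since $ba^{p}b^{-1}=a^{q}$, the element $a$ lies in an abelian subgroup on which $b$ acts by an automorphism, so $A$ is abelian and is normal in $B$ (it is the kernel of the map $B\to\mathbb{Z}$ sending $a\mapsto 0$, $b\mapsto 1$). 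In particular $a$ is a nontrivial element of infinite order that is contained in an abelian subgroup of $G$.

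First I would observe that $a$ must be a parabolic element of $G$. If $a$ were hyperbolic, then by Lemma~\ref{lemma:elementary-hyp} its centralizer $C_{G}(a)=E(a)$ would be cyclic, and by Theorem~\ref{thm:Osin}(2) the subgroup $E(a)$ is almost malnormal. But $b a^{p} b^{-1}=a^{q}$ means $b$ conjugates the infinite-order element $a^{p}\in E(a)$ to the infinite-order element $a^{q}\in E(a)$, so $E(a)\cap bE(a)b^{-1}$ contains $\langle a^{q}\rangle$, which is infinite; since $b\notin E(a)$ (as $b$ does not commute with $a$ in $B$), this contradicts almost malnormality. Hence $a$ is parabolic, lying in some maximal parabolic subgroup $P$. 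Next, since $A$ is abelian and contains the parabolic infinite-order element $a$, Corollary~\ref{cor:parabolic-element} (applied to each generator $b^{i}ab^{-i}$, which commutes with $a$ after noting each such conjugate shares the relation structure) gives $A\subseteq P$; more directly, Lemma~\ref{lemma:abelian-subgp} places the whole abelian group $A$ inside a single maximal parabolic subgroup.

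It then remains to pull $b$ itself into $P$. The element $a\in P$ has infinite order, and $b$ conjugates $a^{p}$ into $P$ (indeed $ba^{p}b^{-1}=a^{q}\in P$), so $a^{q}\in P\cap bPb^{-1}$; as $a^{q}$ has infinite order, the intersection $P\cap bPb^{-1}$ is infinite, and Corollary~\ref{cor:parabolic-malnormal} (almost malnormality of maximal parabolic subgroups, valid since $G$ is finitely generated relatively hyperbolic) forces $b\in P$. Since $a,b\in P$ and $B=\langle a,b\rangle$, we conclude $B\subseteq P$, so $B$ is parabolic.

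The main obstacle I anticipate is the step identifying $a$ as parabolic: one must handle the almost-malnormality argument carefully, because $b$ conjugates $a^{p}$ to $a^{q}$ rather than $a$ to $a$, so one should track the infinite-order overlap $\langle a^{\mathrm{lcm}(p,q)}\rangle$ or simply $\langle a^{q}\rangle$ inside $E(a)\cap bE(a)b^{-1}$. A secondary subtlety is that $G$ here need not be torsion-free (the hypotheses of Lemma~\ref{lemma:Osin-BS} only require finite generation and relative hyperbolicity), so I would invoke the almost-malnormal versions of the malnormality corollaries—Theorem~\ref{thm:Osin}(1),(2) and Corollary~\ref{cor:parabolic-malnormal}—rather than the strict malnormality available only in the torsion-free case.
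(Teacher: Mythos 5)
Your treatment of the parabolic case is exactly the paper's argument and is fine: $a^q=ba^pb^{-1}$ puts the infinite-order element $a^q$ in $P\cap bPb^{-1}$, and almost malnormality of maximal parabolics (Corollary~\ref{cor:parabolic-malnormal}) forces $b\in P$, hence $B=\langle a,b\rangle\subseteq P$. In the hyperbolic case, your almost-malnormality computation is also correct as far as it goes: $\langle a^{q}\rangle\subseteq E(a)\cap bE(a)b^{-1}$ is infinite, so by Theorem~\ref{thm:Osin}(2) you get $b\in E(a)$ --- and this is actually a nice alternative to the paper, which instead cites \cite[Corollary 4.21]{OsinBook} to deduce $q=\pm p$ and only then places $b$ in $E(a)$ via its definition. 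But the step where you turn this into a contradiction is a genuine gap: you assert $b\notin E(a)$ ``as $b$ does not commute with $a$,'' which conflates $E(a)$ with the centralizer $C_G(a)$. The lemma does \emph{not} assume $G$ torsion-free, so Lemma~\ref{lemma:elementary-hyp} (which you invoke to say $C_G(a)=E(a)$ is cyclic) does not apply; in general $E(a)=\{f: f^{-1}a^{n}f=a^{\pm n}\}$ is merely virtually cyclic, possibly nonabelian, and membership of $b$ in $E(a)$ implies nothing about $[a,b]$. Worse, your parenthetical claim that $b$ does not commute with $a$ in $B$ is itself false when $p=q=1$, since $BS(1,1)\cong\mathbb{Z}\times\mathbb{Z}$ is allowed by the hypothesis $pq\neq 0$. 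So at this point you have \emph{proved} $B=\langle a,b\rangle\subseteq E(a)$, not refuted it, and the hyperbolic case is open. The repair is short: $B\subseteq E(a)$ makes $B$ virtually cyclic; but $B$ is torsion-free (as $pq\neq 0$) and a torsion-free virtually cyclic group is trivial or infinite cyclic (Stallings, as used in the proof of Lemma~\ref{lemma:elementary-hyp}), while $BS(p,q)$ is never cyclic (if $p\neq q$ it is nonabelian; if $p=q$ it contains $\langle a^{p},b\rangle\cong\mathbb{Z}\times\mathbb{Z}$). The paper's own endgame is parallel: after $q=\pm p$, it exhibits $\langle a^{p},b^{2}\rangle\cong\mathbb{Z}\times\mathbb{Z}$ inside the elementary group $E(a)$.

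A secondary error: your ``key structural fact'' that $A=\langle b^{i}ab^{-i};\, i\in\mathbb{Z}\rangle$ is abelian is false unless $|p|=1$ or $|q|=1$. The relation $ba^{p}b^{-1}=a^{q}$ says $b$ conjugates the subgroup $\langle a^{p}\rangle$ onto $\langle a^{q}\rangle$; it does not give an action of $b$ on $\langle a\rangle$. For instance in $BS(2,3)$ the element $x=bab^{-1}$ satisfies $x^{2}=a^{3}$, and $\langle a,x\rangle$ is a nonabelian amalgam of two infinite cyclic groups (a trefoil-type group), so the normal closure of $a$ is nonabelian. Fortunately this claim is never used in your final argument --- once $a$ is known to be parabolic, you pull $b$ into $P$ directly via almost malnormality --- so the cure is simply to delete the discussion of $A$ (and the attendant appeal to Lemma~\ref{lemma:abelian-subgp}) and to close the hyperbolic case as indicated above.
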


\begin{proof} 
The assumption $pq\neq 0$ implies that $B$ is torsion-free.

Firstly, assume that $a$ is parabolic. Then $a$ is contained in a maximal parabolic subgroup $P$, and we have that $a^{q} \in P \cap bPb^{-1}$ contradicting to Corollary \ref{cor:parabolic-malnormal}, unless $b \in P$. So, $B$ is a parabolic subgroup. 

Now, assume that $a$ is hyperbolic. Then $a \in E(a)$ by Theorem \ref{thm:Osin} (2), and $E(a)$ is hyperbolic as an abstract group. By~\cite[Corollary 4.21]{OsinBook}, $q = \pm p$ and so, by Theorem~\ref{thm:Osin} (2), $b \in E(a)$. Therefore, $B\cong \left<a,b; ba^{p}b^{-1} = a^{\pm p}\right>$ is a subgroup of $E(a)$. However, the subgroup $\left\langle a^{p},b^2\right\rangle $ of $B$ is isomorphic to a free abelian group of rank 2, which is a contradiction. Therefore, $a$ cannot be a hyperbolic element of $G$.
\end{proof}

\begin{corollary}\label{cor:BS}
Let $B \cong BS(1,q)$, where $ q \geq 2$. Then $B$ is not a subgroup of a group in $\mathcal{R}$.
\end{corollary}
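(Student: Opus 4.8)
The plan is to argue by contradiction, invoking Lemma~\ref{lemma:Osin-BS} together with the defining property of the toral class $\mathcal{R}$. First I would suppose that $B \cong BS(1,q)$ with $q \geq 2$ is a subgroup of some $\Gamma \in \mathcal{R}$. Since $\Gamma$ is finitely generated and toral, it is hyperbolic relative to a finite collection of abelian subgroups $\{H_{1}, \dots, H_{m}\}$, the finiteness of the collection being guaranteed by Remark~\ref{remark:fg-hyp}. Thus the hypotheses of Lemma~\ref{lemma:Osin-BS} are met with $pq = q \neq 0$, and the lemma yields that $B$ is a parabolic subgroup of $\Gamma$.

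Next I would extract the contradiction from the toral hypothesis. By Definition~\ref{def:parabolic}, a parabolic subgroup is conjugate into one of the $H_{\lambda}$, and by Definition~\ref{def:toral} each $H_{\lambda}$ is abelian in the toral setting; hence every parabolic subgroup of $\Gamma$ is abelian. On the other hand, $BS(1,q) = \langle a, b; bab^{-1} = a^{q}\rangle$ is non-abelian whenever $q \geq 2$, because $a$ has infinite order and $bab^{-1} = a^{q} \neq a$. This contradicts $B$ being parabolic, and completes the argument. (The case where $\Gamma$ happens to be abelian is subsumed here, since $\Gamma$ is then hyperbolic relative to itself and Lemma~\ref{lemma:Osin-BS} applies verbatim; of course an abelian $\Gamma$ cannot contain the non-abelian $B$ in the first place.)

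The proof requires essentially no new work: all the substantive content has already been isolated in Lemma~\ref{lemma:Osin-BS}, which forces any embedded $BS(p,q)$ to be parabolic. The only point to verify is that \emph{parabolic} forces \emph{abelian} in the toral case, which is immediate from the two definitions just cited. Consequently I do not expect a genuine obstacle here; the statement is a direct corollary of the lemma. The one small thing to be careful about is simply to confirm that $q \geq 2$ indeed makes $BS(1,q)$ non-abelian, equivalently that $a$ is of infinite order so that the relation $bab^{-1} = a^{q}$ is nontrivial.
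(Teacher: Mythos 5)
Your proposal is correct and follows essentially the same route as the paper: assume $B \leq \Gamma \in \mathcal{R}$, invoke Lemma~\ref{lemma:Osin-BS} to conclude $B$ is parabolic, and derive a contradiction since parabolic subgroups of a toral relatively hyperbolic group are abelian while $BS(1,q)$ with $q \geq 2$ is not. The extra details you verify (finiteness of the peripheral collection via Remark~\ref{remark:fg-hyp}, $pq \neq 0$, and the infinite order of $a$ making $B$ non-abelian) are all sound and merely make explicit what the paper leaves implicit.
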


\begin{proof}
Let $G$ be a group in $\mathcal{R}$. Assume that $B$ is a subgroup of $G$. Then $B$ is a parabolic subgroup by Lemma~\ref{lemma:Osin-BS}; hence, $B$ is abelian, which is a contradiction.
\end{proof}

The following Theorem is due to Meskin~\cite{Meskin}.

\begin{theorem}\label{lemma:Meskin} \cite[Theorem C]{Meskin} 
$BS(p,q)$ is residually finite if and only if $|p|=1$ or $|q|=1$ or $|p|=|q|$.
\end{theorem}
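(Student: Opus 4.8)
The plan is to prove the two implications separately, grouping the three conditions $|p|=1$, $|q|=1$, $|p|=|q|$ on the residually finite side.

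For the sufficiency direction I would dispose of three cases. When $|p|=1$ or, by the symmetry $BS(p,q)\cong BS(q,p)$ induced by $b\mapsto b^{-1}$, when $|q|=1$, the group is metabelian: $BS(1,q)\cong\mathbb{Z}[1/q]\rtimes\mathbb{Z}$ is an ascending HNN extension of $\mathbb{Z}$, and P.~Hall's theorem that every finitely generated metabelian group is residually finite settles this case. When $|p|=|q|=n$, i.e. $q=\pm n$, I would exhibit a finite-index subgroup isomorphic to $F_{n}\times\mathbb{Z}$: for $q=n$ the kernel of the map $a\mapsto 1,\ b\mapsto 0$ onto $\mathbb{Z}_{n}$ has index $n$ and splits as the central $\langle a^{n}\rangle\cong\mathbb{Z}$ times the free group on the conjugates $a^{i}ba^{-i}$, $0\le i<n$; for $q=-n$ the group is virtually $BS(n,n)$, via the index-$2$ subgroup $\langle a,b^{2}\rangle$ in which $a^{n}$ is again centralized. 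Since $F_{n}\times\mathbb{Z}$ is residually finite and residual finiteness passes up to a finite-index overgroup, $BS(n,\pm n)$ is residually finite.

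For the necessity direction, assume $|p|,|q|\ge 2$ and $|p|\neq|q|$, and I would show $BS(p,q)$ is not residually finite. The starting point is a constraint on finite quotients: if $\phi\colon BS(p,q)\to Q$ maps onto a finite group and $n$ is the order of $\phi(a)$, then the defining relation gives $\phi(a)^{p}=\phi(b)\phi(a)^{p}\phi(b)^{-1}=\phi(a)^{q}$, so $\phi(a)^{p}$ and $\phi(a)^{q}$ are conjugate and of equal order, whence $\gcd(n,p)=\gcd(n,q)$. Equivalently, $\min(v_{\ell}(n),v_{\ell}(p))=\min(v_{\ell}(n),v_{\ell}(q))$ for every prime $\ell$, where $v_{\ell}$ denotes the $\ell$-adic valuation. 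Since $|p|\neq|q|$ there is a prime $r$ with $v_{r}(p)\neq v_{r}(q)$; using $BS(p,q)\cong BS(q,p)$ I may assume $v_{r}(p)<v_{r}(q)$ and put $s=v_{r}(p)$. The constraint then forces $v_{r}(n)\le s$ in every finite quotient, so the $r$-part of the order of $\phi(a)$ is bounded by $r^{s}$ regardless of $Q$.

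The remaining and hardest step is to manufacture a witness $g\in BS(p,q)$ with $g\neq 1$ yet $\phi(g)=1$ for every homomorphism $\phi$ into a finite group. Nontriviality of $g$ I would check with Britton's lemma in the HNN presentation with base $\langle a\rangle\cong\mathbb{Z}$ and associated subgroups $p\mathbb{Z}$ and $q\mathbb{Z}$, where reduced words have a normal form; the vanishing $\phi(g)=1$ I would derive solely from the bound $v_{r}(n)\le s$. The essential difficulty is that this bound is a genuine finite-quotient phenomenon \emph{not} implied by the defining relation, so the naive candidate $[a^{p},ba^{p}b^{-1}]$ is useless (it already equals $[a^{p},a^{q}]=1$ in $BS(p,q)$), and $g$ must be built more delicately; producing one that works uniformly for all admissible $r,s$, including the case $s>0$ as in $BS(4,8)$, is the technical heart of Meskin's argument in \cite{Meskin}, whose construction I would follow. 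Once such a $g$ is produced, it lies in the kernel of every finite-quotient map, contradicting residual finiteness and completing the proof.
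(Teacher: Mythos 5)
Note first that the paper does not prove this statement at all: it is quoted as Theorem C of \cite{Meskin} with a citation and no argument, so your attempt can only be judged as a self-contained proof, and as such it has a genuine hole exactly where you acknowledge one. In the necessity direction, your preamble is correct: if $n$ is the order of $\phi(a)$ in a finite quotient, then $\phi(a)^{p}$ and $\phi(a)^{q}$ are conjugate, so $n/\gcd(n,p)=n/\gcd(n,q)$, and for a prime $r$ with $s=v_{r}(p)<v_{r}(q)$ this forces $v_{r}(n)\le s$. But this only constrains images of $a$; it exhibits no nontrivial element killed by every finite quotient, and you then explicitly defer the construction of the witness $g$ to Meskin's paper. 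That construction is the entire content of the hard half of the theorem --- it is where the hypotheses $|p|,|q|\ge 2$ and $|p|\neq|q|$ are actually consumed, and where Britton's lemma must be played against the valuation bound --- so what you have written is a proof plan, not a proof.

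There is also a concrete error in the sufficiency direction. In $B=BS(n,-n)$ the subgroup $\langle a,b^{2}\rangle$ does not have index $2$: by Reidemeister--Schreier with transversal $\{1,b\}$, the kernel of the map to $\mathbb{Z}_{2}$ sending $a\mapsto 0$, $b\mapsto 1$ is $\langle a,\ bab^{-1},\ b^{2}\rangle$, and $bab^{-1}\notin\langle a,b^{2}\rangle$ for $n\ge 2$. Indeed $\langle a^{n}\rangle$ is normal in $B$ (conjugation by $b$ inverts $a^{n}$, and the index-$2$ kernel centralizes it); killing $a^{n}$ maps the kernel onto $\mathbb{Z}_{n}*\mathbb{Z}_{n}*\mathbb{Z}$, with generators the images of $a$, $bab^{-1}$, $b^{2}$, and the image of $\langle a,b^{2}\rangle$ is the free factor $\mathbb{Z}_{n}*\mathbb{Z}$, which by the Kurosh subgroup theorem has \emph{infinite} index there. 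So $\langle a,b^{2}\rangle$ has infinite index in $B$ and cannot carry the argument. The case is salvageable by a standard route: $B/\langle a^{n}\rangle\cong\mathbb{Z}_{n}*\mathbb{Z}$ is virtually free, so a suitable finite-index subgroup of $B$ is a central extension of a finitely generated free group by $\mathbb{Z}=\langle a^{n}\rangle$; such an extension splits as $F_{k}\times\mathbb{Z}$, and then residual finiteness passes up, exactly as in your $BS(n,n)$ computation (which, together with the metabelian case $BS(1,q)\cong\mathbb{Z}[1/q]\rtimes\mathbb{Z}$ via P.~Hall, is correct). As written, however, both the $q=-n$ case and the whole necessity direction are gaps.
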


\begin{proposition}\label{prop:BS-fully-res-H}
Let $\mathcal{H}$ denote the class of hyperbolic groups. Let $B = BS(1,q)$ where $q \geq 2$. Then $B$ is torsion-free fully residually-$\mathcal{H}$. In particular, torsion-free fully residually-$\mathcal{H}$ groups do not embed into groups in $\mathcal{R}$.
\end{proposition}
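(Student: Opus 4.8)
The plan is to establish the two assertions separately. The ``in particular'' clause is the easy half: once $B = BS(1,q)$ with $q \geq 2$ is shown to be torsion-free fully residually-$\mathcal{H}$, I invoke Corollary~\ref{cor:BS}, which says precisely that $B$ cannot embed into any group in $\mathcal{R}$. This immediately exhibits $B$ as a torsion-free fully residually-$\mathcal{H}$ group that does not embed into a group from $\mathcal{R}$, giving the final sentence. So the entire mathematical content lies in showing that $B$ is torsion-free fully residually-$\mathcal{H}$.

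That $B$ is torsion-free is standard for $BS(p,q)$ with $pq \neq 0$ (it was already noted in the proof of Lemma~\ref{lemma:Osin-BS}), so the heart of the matter is the fully residually-$\mathcal{H}$ property in the sense of Definition~\ref{def:resCworking}. My strategy is to use the residual finiteness of $B$ together with the fact that finite groups are (trivially) hyperbolic. Since $q \geq 2$ we have $|p| = 1$, so by Theorem~\ref{lemma:Meskin} (Meskin) the group $B = BS(1,q)$ is residually finite. Residual finiteness, however, only gives the residually-$\mathcal{C}$ property for each single nontrivial element. To upgrade to \emph{fully} residually-$\mathcal{H}$, I would argue as follows: the class of finite groups is closed under finite direct products, and finite groups are hyperbolic. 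Given a finite set $S = \{g_1, \dots, g_n\}$ of distinct elements, for each pair $i \neq j$ residual finiteness produces a finite quotient $\phi_{ij}\colon B \to F_{ij}$ separating $g_i$ from $g_j$ (apply the single-element condition to $g_i g_j^{-1} \neq 1$); taking the diagonal map into the finite direct product of all the $F_{ij}$ yields a single homomorphism to a finite, hence hyperbolic, group under which the images of $g_1, \dots, g_n$ are pairwise distinct. This realizes $B$ as fully residually-$\mathcal{H}$.

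I expect the main subtlety—rather than a genuine obstacle—to be bookkeeping about which definition of hyperbolic is in force: finite groups must be admitted as (degenerate) hyperbolic groups for the argument to go through, and the reader should be reminded that residual finiteness does imply the full residual property whenever the separating class is closed under finite products, which is exactly the classical passage from residually finite to fully residually finite noted in the introduction. The only genuinely group-theoretic input is Meskin's theorem applied to the case $|p| = 1$; everything else is the elementary product-of-quotients construction. No combination theorem or relatively hyperbolic machinery is needed for this direction, which is what makes the contrast with Theorem~\ref{thm:embedding-R} so sharp.
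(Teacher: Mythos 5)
Your proposal is correct and follows essentially the same route as the paper's proof: Meskin's theorem gives residual finiteness of $BS(1,q)$, the standard passage (which you spell out via the finite direct product of separating quotients) upgrades this to fully residually finite, finite groups lie in $\mathcal{H}$, and Corollary~\ref{cor:BS} supplies the non-embeddability into $\mathcal{R}$. The paper simply cites the residually-finite-implies-fully-residually-finite fact from the introduction where you make the bookkeeping explicit; there is no substantive difference.
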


\begin{proof}
Note that $B$ is torsion-free. By Theorem~\ref{lemma:Meskin}, $B$ is residually finite. Hence, $B$ is fully residually finite. It follows that $B$ is fully residually-$\mathcal{H}$, as the class $\mathcal{H}$ contains all finite groups. Note that $B$ cannot be embedded into a group from $\mathcal{R}$ by Corollary~\ref{cor:BS}. Therefore, the result follows.
\end{proof}

Note that, whereas the groups $B_q = BS(1,q)$, $q\geq 2$, are torsion-free, in our proof the hyperbolic quotients of $B_q$ all have torsion elements. We ask the following question, which can be contrasted with the statement of Theorem~\ref{thm:embedding-R}.

\begin{question}
Let $G$ be a finitely generated fully residually-$\mathcal{R}$ group. Does $G$ embed into a group in $\mathcal{R}$?
\end{question}

\bibliographystyle{plain}
\bibliography{OnFullyResRGps}
\end{document}